\numberwithin{equation}{section}
\newtheorem{theo}{Theorem}[section]
\newtheorem{lem}[theo]{Lemma}
\newtheorem{defi}[theo]{Definition}
\begin{document}

\newcommand{\mG}{\mathcal{G}}
\newcommand{\xu}{x_{u^*}}
\newcommand{\ga}{\theta_{1,2,3}}
\newcommand{\gb}{\theta_{1,2,4}}

\title{The maximum spectral radius of graphs of given size with forbidden subgraph\thanks{This work is partially supported by the National Natural Science Foundation of China (Grant No. 11971180), the Guangdong Provincial Natural Science Foundation (Grant No. 2019A1515012052).}}
\author{Xiaona Fang, Lihua You\thanks{Corresponding author: ylhua@scnu.edu.cn}\\
	{\small School of Mathematical Sciences, South China Normal University,}\\ {\small Guangzhou, 510631, P. R. China}
}
\date{}
\maketitle

{\bf Abstract}: Let $G$ be a graph of size $m$ and $\rho(G)$ be the spectral radius of its adjacency matrix. A graph is said to be $F$-free if it does not contain a subgraph isomorphic to $F$. In this paper, we prove that if $G$ is a $K_{2,r+1}$-free non-star graph with $m\geq (4r+2)^2+1$, then $\rho(G)\leq \rho(S_m^1)$, with equality if and only if $G\cong S_m^1$. Recently, Li, Sun and Wei \cite{li2022} showed that for any $\ga$-free graph of size $m\geq 8$, $\rho(G)\leq \frac{1+\sqrt{4m-3}}{2}$, with equality if and only if $G\cong S_{\frac{m+3}{2},2}$. However, this bound is not attainable when $m$ is even. We proved that if $G$ is $\ga$-free and $G\ncong S_{\frac{m+3}{2},2}$ with $m\geq 22$, then $\rho(G)\leq \rho(F_{m,1})$ if $m$ is even, with equality if and only if $G\cong F_{m,1}$, and $\rho(G)\leq \rho(F_{m,2})$ if $m$ is odd, with equality if and only if $G\cong F_{m,2}$.

{\bf Keywords}: Forbidden subgraph; Spectral radius; Adjacency matrix; Theta graph; Complete bipartite subgraph.

{\bf AMS classification: }05C50, 05C35

\baselineskip=0.30in

\section{Introduction}\label{sec1}
\hspace{1.5em}
Let $A(G)$ be the adjacency matrix of a graph $G$ and $\rho(G)$ be the spectral radius of $A(G)$. A graph is said to be $F$-free if it does not contain a subgraph isomorphic to $F$. Maximizing the number of edges over all $F$-free graphs with $n$ vertices is a classical problem in graph theory. Basing on this, Nikiforov \cite{niki2010pn} proposed a spectral Tur\'an problem which asks to determine the maximum spectral radius of an $F$-free graph of order $n$. This is the spectral analogue of Tur\'an type extremal problem which was well studied in the literatures (see \cite{babaiKst,chen2019,taitminor,wilf1986Kr,zhao2021,zhai2020,zhai2022,zhai2012} and so on). For more results, readers are referred to a survey by Nikiforov \cite{niki2011}.

If we fix the number of edges instead of the number of vertices in the spectral Tur\'an problem, then we have the following question: what is the maximum spectral radius of an $F$-free graph with $m$ edges? Nosal \cite{nosal} proved that $\rho(G)\leq \sqrt{m}$ for every triangle-free graph $G$ of size $m$. When $G$ is non-bipartite and triangle-free, Lin, Ning and Wu \cite{lin2021} slightly improved the bound to $\rho(G)\leq \sqrt{m-1}$. In order to get a sharp spectral condition for $m>5$, Zhai and Shu \cite{zhai2022} further improved the bound to $\rho(G)\leq \rho^*(m)$ where $\rho^*(m)$ is the largest root of $x^3-x^2-(m-2)x+m-3=0$ for every triangle-free and non-bipartite graph $G$ of size $m$. Nikiforov \cite{niki2009c4} showed that if $G$ is $C_4$-free then $\rho(G)\leq \sqrt{m}$. It implies that $G$ contains $C_3$ and $C_4$ if $\rho(G)>\sqrt{m}$ combining with Nosal's result. Zhai, Lin and Shu \cite{zhai2021} followed this direction in determining which subgraphs will be contained in $G$ if $\rho(G)\geq f(m)$, where $f(m)\sim \sqrt{m}$ as $m \rightarrow \infty$. Min, Lou and Huang \cite{min2022} gave a sharp upper bound of the spectral radius of $C_5$-free or $C_6$-free graphs with given even size.

For a graph $F$ and an integer $m$, let $\mG (m, F)$ be the set of $F$-free graphs of size $m$ without isolated vertices. For $F=K_{r+1}$ or $K_{2,r+1}$, the following results are obtained in \cite{niki2002,niki2006,zhai2021}.

\begin{theo}{\rm (\cite{niki2002,niki2006})}\label{Kr+1}
	Let $G\in \mG(m,K_{r+1})$. Then $\rho(G)\leq \sqrt{2m(1-\frac{1}{r})}$, with equality if and only if $G$ is a complete bipartite graph for $r=2$, and $G$ is a complete regular $r$-partite graph for $r\geq 3$.
\end{theo}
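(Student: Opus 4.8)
The plan is to estimate $\rho(G)$ through its Perron eigenvector together with the classical inequality of Motzkin and Straus. First I would assume that $G$ is connected (returning to this below) and let $\mathbf{x}=(x_v)_{v\in V(G)}$ be the positive unit eigenvector for $\rho:=\rho(G)$, so that $\sum_{v}x_v^2=1$ and
\[
\rho=\mathbf{x}^{\mathsf T}A(G)\mathbf{x}=2\sum_{uv\in E(G)}x_ux_v .
\]
Applying Cauchy--Schwarz to the $m$ summands and then the Motzkin--Straus inequality to the weights $y_v:=x_v^2$ (which satisfy $y_v\ge 0$ and $\sum_v y_v=1$) gives
\[
\Big(\tfrac{\rho}{2}\Big)^2=\Big(\sum_{uv\in E(G)}x_ux_v\Big)^2\le m\sum_{uv\in E(G)}x_u^2x_v^2\le \frac{m}{2}\Big(1-\frac{1}{\omega}\Big),
\]
where $\omega=\omega(G)$ denotes the clique number. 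Hence $\rho^2\le 2m\big(1-\tfrac{1}{\omega}\big)$, and since $G$ is $K_{r+1}$-free we have $\omega\le r$, which yields $\rho\le\sqrt{2m\big(1-\tfrac1r\big)}$.

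To remove the connectedness assumption, note that $\rho(G)$ equals $\rho(H)$ for some component $H$ of $G$; since $G$ has no isolated vertices, applying the inequality to $H$ and using $m(H)\le m$ recovers the bound, with equality only if $m(H)=m$, i.e. $G=H$ is connected. I may therefore assume throughout the equality analysis that $G$ is connected and $\mathbf{x}>0$.

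The equality characterization is where the real work lies. Equality forces equality in Cauchy--Schwarz, so all products $x_ux_v$ over edges $uv$ equal a common constant $c>0$; because $\mathbf{x}>0$ and $G$ is connected, this propagates ($x_u=x_w$ whenever $u,w$ have a common neighbour), so the entries of $\mathbf{x}$ take a single value when $G$ is non-bipartite and at most two values when $G$ is bipartite. For $r\ge 3$, equality also forces $\omega=r\ge 3$, so $G$ contains a triangle and is non-bipartite; hence $\mathbf{x}$ is constant, $G$ is regular, and feeding this regularity into the equality case of the Motzkin--Straus bound (together with Tur\'an's theorem) identifies $G$ as the complete regular $r$-partite graph. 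For $r=2$ the bound reads $\rho\le\sqrt m$, the two-value structure yields a complete bipartite graph via the eigenvalue equations, and \emph{every} $K_{a,b}$ attains the bound since $\rho(K_{a,b})=\sqrt{ab}=\sqrt m$; this is exactly why no balancedness is needed when $r=2$, whereas the regularity extracted for $r\ge 3$ is what rules out unbalanced parts. I expect the main obstacle to be this last step: the Motzkin--Straus maximizers are not unique, so its tightness alone does not determine $G$, and one must combine it with the regularity coming from the Cauchy--Schwarz equality to pin down the extremal graph precisely.
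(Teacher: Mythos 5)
This theorem is quoted in the paper from Nikiforov's papers \cite{niki2002,niki2006} and is not proved there, so there is no in-paper argument to compare against; your proposal is essentially Nikiforov's original proof (Cauchy--Schwarz on $\sum_{uv\in E}x_ux_v$ followed by Motzkin--Straus applied to $y_v=x_v^2$, with the equality case settled by combining the Cauchy--Schwarz rigidity with Tur\'an's theorem), and the inequality part is completely correct. The equality analysis is also sound in outline; the one spot you should not wave at is $r=2$: equality gives $\omega=2$, i.e.\ $G$ triangle-free, but triangle-free does not immediately mean bipartite, so you must first dispose of the non-bipartite alternative (there the constant eigenvector forces $G$ to be $d$-regular with $d^2=m=nd/2$, hence $d=n/2$; for an edge $uv$ of a triangle-free graph $N(u)\cap N(v)=\varnothing$, so $d(u)+d(v)=n$ forces $V=N(u)\cup N(v)$ with both sides independent, i.e.\ $G$ is bipartite after all) before invoking the two-value structure and biregularity to conclude $G\cong K_{a,b}$. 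With that small step filled in, the argument is complete and matches the cited source.
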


\begin{theo}{\rm (\cite{zhai2021})}\label{zhaik2,r+1}
	Let $G$ be a graph of size $m$. If $G\in \mG (m,  K_{2,r+1})$ with $r\geq 2$ and $m\geq 16r^2$. Then $\rho(G)\leq \sqrt{m}$, and equality holds if and only if $G$ is a star.
\end{theo}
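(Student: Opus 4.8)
The plan is to run a local eigenvector analysis at a vertex of maximum weight and reduce everything to a single scalar identity. First I would assume $G$ is connected (otherwise replace $G$ by the component attaining $\rho(G)$; a star is connected, so nothing is lost) and let $\mathbf{x}$ be the Perron eigenvector of $A(G)$, normalised so that $x_{u^*}=\max_v x_v=1$. Write $d=d_{u^*}$, let $N_1=N(u^*)$, and let $N_2$ be the set of vertices at distance $2$ from $u^*$. Expanding $\rho^2=\rho^2 x_{u^*}=\sum_{v\in N_1}\sum_{y\sim v}x_y$ and grouping the length-$2$ walks by their far endpoint $y$ gives the exact identity $\rho^2=d+\sum_{y\ne u^*}\deg_{N_1}(y)\,x_y$, where $\deg_{N_1}(y)=|N(y)\cap N_1|$ is the number of common neighbours of $y$ and $u^*$. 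Counting edges by their position relative to $u^*$ yields $m=d+e(N_1)+e(N_1,N_2)+e'$ with $e'\ge 0$, and since $\sum_{y\ne u^*}\deg_{N_1}(y)=2e(N_1)+e(N_1,N_2)$ these combine into
\[
\rho^2-m=\sum_{y\ne u^*}\deg_{N_1}(y)\,(x_y-1)+e(N_1)-e'.
\]

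Next I would bring in the forbidden subgraph. As $G$ is $K_{2,r+1}$-free, any two vertices have at most $r$ common neighbours, so $\deg_{N_1}(y)\le r$ for every $y\ne u^*$; in particular each vertex of $N_1$ has at most $r$ neighbours inside $N_1$. Every term $\deg_{N_1}(y)(x_y-1)$ is nonpositive, so the whole problem reduces to controlling the positive contribution $e(N_1)$. Using $2e(N_1)=\sum_{y\in N_1}\deg_{N_1}(y)$ and discarding the nonpositive $N_2$-terms and $-e'$, it suffices to prove
\[
T_1:=\sum_{y\in N_1}\deg_{N_1}(y)\,x_y\le e(N_1),
\]
equivalently that the $\deg_{N_1}$-weighted average of $x_y$ over $N_1$ is at most $\tfrac12$. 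Since $x_y\le d_y/\rho$, any neighbour $y$ with $d_y\le\rho/2$ automatically satisfies $x_y\le\tfrac12$ and is harmless; and if $e(N_1)=0$ (no triangle through $u^*$) the reduced inequality is vacuous and one reads off $\rho^2\le m$ directly.

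The hard part is a neighbour $y\in N_1$ that lies on a triangle with $u^*$ but itself has large degree, i.e. a \emph{second hub}, for which $x_y$ may exceed $\tfrac12$; this is the only mechanism that can make $T_1>e(N_1)$, and ruling it out is where the hypothesis $m\ge 16r^2$, i.e. $\sqrt m\ge 4r$, must enter. I would argue by contradiction, assuming $\rho\ge\sqrt m$ and aiming to force $G$ to be a star. Then $d\ge\rho\ge\sqrt m$ (from $\rho=\sum_{v\in N_1}x_v\le d$), so $u^*$ already has large degree, and the codegree bound forces the neighbourhood of any second hub $y$ to meet $N_1$ in at most $r$ vertices; hence $u^*$ and $y$ together are incident to at least $d+d_y-1$ edges, which together with $\sqrt m\ge 4r$ leaves too small an edge budget for the weighted excess $\sum_{y\in N_1}\deg_{N_1}(y)(x_y-\tfrac12)$ to become positive. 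Making this quantitative — bounding the number of neighbours of $u^*$ of degree exceeding $\rho/2$ and their eigenvector weights, and showing that an excess $T_1>e(N_1)$ would manufacture too much structure around $u^*$ relative to the codegree constraint — is the main obstacle and the precise point at which the threshold $16r^2$ is needed. The outcome I expect is $T_1\le e(N_1)$, hence $\rho^2\le m$, forcing $\rho=\sqrt m$.

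Finally I would settle equality by tracing the inequalities backwards. Equality $\rho(G)=\sqrt m$ forces $e'=0$, every $N_2$-term to vanish, and $T_1=e(N_1)$ with all auxiliary estimates tight; the strict version of the hub analysis then gives $e(N_1)=0$, and the vanishing of the $N_2$-terms together with connectivity forces $N_2=\emptyset$ (a distance-$2$ vertex $w$ with $x_w=1$ would be a second maximum non-adjacent to $u^*$, excluded again by the same counting). Hence every edge is incident to $u^*$ and $N_1$ is independent, i.e. $G\cong K_{1,m}$, the star, which indeed realises $\rho(K_{1,m})=\sqrt m$ and completes the characterisation.
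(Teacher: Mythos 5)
This statement is quoted from Zhai, Lin and Shu and is not reproved in the paper, but Section~3 runs the same machinery for the non-star case (Theorem~\ref{k2,r+1}), so I compare your sketch with that argument. Your setup --- the identity $\rho^2-m=\sum_{y\neq u^*}\deg_{N_1}(y)(x_y-1)+e(N_1)-e'$ obtained from (\ref{maineq}) together with the codegree bound $\deg_{N_1}(y)\leq r$ --- is exactly the paper's starting point and its Claim~1. The problems come afterwards.

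First, your reduction is to a statement that is actually false. After discarding the $N_2$-terms you ask for $T_1=\sum_{y\in N_1}\deg_{N_1}(y)\,x_y\leq e(N_1)$. But take the $C_4$-free (hence $K_{2,r+1}$-free) graph consisting of two adjacent vertices $u^*$ and $y$ with one common neighbour $z$ and with $(m-3)/2$ pendant edges attached to each of $u^*$ and $y$. By symmetry $x_y=x_{u^*}=1$, while $e(N_1)=1$ (the single edge $yz$) and $\deg_{N_1}(y)=\deg_{N_1}(z)=1$, so $T_1=x_y+x_z=1+x_z>1=e(N_1)$. The theorem still holds for this graph only because the discarded terms $\sum_{w\in N_2}\deg_{N_1}(w)(x_w-1)$, coming from the pendants of $y$, are very negative. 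So the $N_2$-terms cannot be thrown away before the hub analysis: they are precisely what pays for a second hub. Second, the step you yourself label ``the main obstacle'' --- the quantitative control of neighbours with $x_y>\frac12$, where $m\geq 16r^2$ must enter --- is never carried out; you only assert that the edge budget is too small. The paper's route (Claims~2--4 in the proof of Theorem~\ref{k2,r+1}, following \cite{zhai2021}) resolves both issues at once by a three-way partition of $N(u^*)$: the set $U_1$ of neighbours whose $W$-neighbours have large total $W$-degree, whose contribution is charged against $e(W)$; the set $U_2'$ of neighbours with $d_W$ large, for which $\frac12 d_U(u)x_u$ is dominated by $\frac12 d_W(u)x_{u^*}-\sum_{w\in N_W(u)}x_w$, i.e.\ charged against the \emph{retained} $N_2$-terms; and the remainder $U_2''$, for which one genuinely proves $x_u<\frac12 x_{u^*}$, and it is only there that an inequality of the form ${\rho}^2-(4r+2)\rho+4r>0$ (hence the lower bound on $m$) is used. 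Without an analogue of this partition, and in particular without keeping the $N_2$-terms to offset the $U_2'$-type hubs, your argument does not close.
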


Let $S_n^k$ be the star of order $n$ with $k$ disjoint edges within its independent set. Following Theorem \ref{zhaik2,r+1}, we consider $K_{2,r+1}$-free graphs among all non-star graphs and obtain the following sharp result.

\begin{theo}\label{k2,r+1}
	Let $G\in \mG (m, K_{2,r+1})\setminus\{S_{m+1}\}$ with $r\geq 2$ and $m\geq (4r+2)^2+1$. Then $\rho(G)\leq \rho(S_m^1)$, where $\rho(S_m^1)$ is the largest real root of the equation $x^3-x^2-(m-1)x+m-3=0$, and equality holds if and only if $G\cong S_m^1$.
\end{theo}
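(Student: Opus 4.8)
The plan is to determine the extremal graph $G$ attaining the maximum spectral radius in $\mG(m,K_{2,r+1})\setminus\{S_{m+1}\}$ and show it must be $S_m^1$. First I would record the two bounds that frame the whole argument. Since $S_m^1$ is itself a non-star $K_{2,2}$-free (hence $K_{2,r+1}$-free) graph of size $m$, it is a feasible competitor, so $\rho(G)\ge\rho(S_m^1)$; on the other hand $G$ is not a star, so Theorem \ref{zhaik2,r+1} gives $\rho(G)<\sqrt m$. A direct check that $\rho(S_m^1)^2=(m-1)+\tfrac{2}{\rho(S_m^1)-1}$ then yields $\sqrt{m-1}<\rho(G)<\sqrt m$, so $\rho:=\rho(G)$ is pinned very close to $\sqrt m$. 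I also note $\rho>4r+2$ from $m\ge(4r+2)^2+1$, which keeps several error terms manageable.

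Let $x$ be the Perron vector normalized by $x_{u^*}=\max_v x_v=1$, where $u^*$ is a max-entry vertex. The engine of the proof is the walk identity
\[
\rho^2=\rho^2 x_{u^*}=d(u^*)+\sum_{w\ne u^*}|N(u^*)\cap N(w)|\,x_w=:d(u^*)+S,
\]
together with the $K_{2,r+1}$-free hypothesis, which forces $|N(u^*)\cap N(w)|\le r$ for every $w\ne u^*$ and, in particular, makes the graph induced on $N(u^*)$ have maximum degree at most $r$. Writing $t:=m-d(u^*)$ for the number of edges not incident to $u^*$, the identity and the two-sided bound on $\rho^2$ give $t-1<S<t$, so the whole problem reduces to controlling the single quantity $S$: if I can show $S<1$, then $t<2$, and since $G$ is not a star ($t\ge 1$) this forces $t=1$, i.e.\ $d(u^*)=m-1$.

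To bound $S$ I would proceed in two stages. A first, crude estimate bounds the eigenvector mass off $u^*$: summing the eigen-equations over $R:=V\setminus(N(u^*)\cup\{u^*\})$ and using $|N(w)\cap N(u^*)|\le r$ yields $\rho\sum_{w\in R}x_w\le 2t$; combining with $S\le r\big(\rho+\sum_{w\in R}x_w\big)$ and $\rho>4r+2$ gives $t=O(r\sqrt m)$, so $u^*$ is already nearly dominating. Feeding this back into the eigen-equation $\rho x_w=1+\sum_{z\sim w,\,z\ne u^*}x_z$ shows that every $x_w$ with $w\ne u^*$ is of order $O(r/\sqrt m)$, since a neighbour of $u^*$ has at most $r$ neighbours inside $N(u^*)$ and the rest of its weight comes from the now-small $R$-mass. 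As $\sum_{w\ne u^*}|N(u^*)\cap N(w)|=2e(N(u^*))+e(N(u^*),R)\le 2t$, this bounds $S$ by $(\max_{w\ne u^*}x_w)\cdot 2t$, which is small. The delicate point is to push this bootstrap to the sharp threshold $S<1$ already at $m\ge(4r+2)^2+1$; I expect this to require iterating the estimate once more and treating separately the boundedly many neighbours of $u^*$ of large degree, rather than the one-shot bound above.

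Once $d(u^*)=m-1$ is established, exactly one edge $e$ lies outside the star centred at $u^*$, leaving three possibilities: $e\subseteq N(u^*)$ (giving $G\cong S_m^1$), $e$ joins $N(u^*)$ to a new pendant vertex (a broom $B$), or $e$ forms a disjoint $K_2$ (a disconnected graph). The last has $\rho=\sqrt{m-1}<\rho(S_m^1)$, and for the broom a short computation shows $\rho(B)$ is the largest root of $x^4-mx^2+(m-2)=0$; comparing this with the cubic $x^3-x^2-(m-1)x+(m-3)=0$ for $S_m^1$—equivalently, checking that this cubic is positive at $x=\rho(B)$ while both relevant roots exceed the largest critical point—shows $\rho(B)<\rho(S_m^1)$. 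Hence the maximizer is $S_m^1$ and the inequality is strict for every other graph, giving the equality characterization. The main obstacle is unquestionably the middle step: converting the soft ``$u^*$ nearly dominating'' information into the exact value $d(u^*)=m-1$, where the forbidden-subgraph condition must be used sharply and the hypothesis $m\ge(4r+2)^2+1$ is exactly what keeps the error terms below the critical value $1$.
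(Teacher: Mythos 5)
Your overall architecture does match the paper's: both arguments work with the Perron vector normalized at a maximum-entry vertex $u^*$, use the identity $\rho^2 x_{u^*}=d(u^*)x_{u^*}+\sum_{w\neq u^*}|N(u^*)\cap N(w)|x_w$ together with the fact that $K_{2,r+1}$-freeness forces $|N(u^*)\cap N(w)|\le r$, reduce the problem to showing that very few edges miss $u^*$, and finish by comparing the spectral radii of the handful of surviving candidates with $\rho(S_m^1)$ via characteristic polynomials (your quartic $x^4-mx^2+m-2$ for the broom is exactly the one the paper uses for $D_{m-2,1}$). The lower bound $\rho>\sqrt{m-1}$ is also the paper's engine; the upper bound $\rho<\sqrt m$ you import from Theorem \ref{zhaik2,r+1} is a legitimate extra but is not actually needed.

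The genuine gap is the middle step, and it is not just a matter of ``iterating once more.'' Your target is $S<1$, which would give $t=m-d(u^*)=1$ outright; but the bootstrap you describe stalls at a fixed point precisely in the regime $m\approx(4r+2)^2$. Concretely: the first stage gives $t\lesssim 2r\rho$, whence $\sum_{w\in W}x_w\le 2t/\rho\lesssim 4r$, and the eigen-equation for $u\in N(u^*)$ then only yields $x_u\le(1+r+4r)/\rho\approx(5r+1)/(4r+2)>1$ — vacuous. A single vertex $u\in N(u^*)$ with $d_U(u)=r$ and huge $W$-degree can a priori have $x_u$ near $1$ and contribute close to $r$ to $S$ by itself, so no absolute bound on $\max_{w\neq u^*}x_w$ of the required size $<1/(4r)$ (or even $<1/4$) is obtainable at this threshold; your scheme would close only for $m\gtrsim 400r^2$. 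The paper's way around this is a \emph{compensation} argument rather than an absolute one: it splits $N(u^*)$ into $U_1$, $U_2'$, $U_2''$ according to $\sum_{w\in N_W(u)}d_W(w)$ and $d_W(u)$, and offsets the contribution of the problematic high-$W$-degree vertices against $e(W)$ and $\tfrac12 e(U,W)$ (Claims 2--4), concluding only that $e(U,W)<2$; it then needs a separate structural lemma (no cut vertex outside $u^*$) to kill the case $e(U,W)=1$ and get $W=\varnothing$. Even after all that, the eigenvector estimates only give $e(U)\le 2$, i.e.\ $t\le 2$, not $t=1$: the case $t=2$ (the graphs $S_{m-1}^2$ and the graph $H$ with a path $P_3$ inside $N(u^*)$) survives every soft bound and must be eliminated by an explicit comparison of characteristic polynomials. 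Your proposal omits both the compensation idea and the $t=2$ endgame, so as written it does not prove the theorem at the stated edge threshold.
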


For any three positive integers $p$, $q$, $r$, with $p\leq q \leq r$ and $q\geq 2$, let $\theta_{p,q,r}$ denote the graph obtained from three internally disjoint paths with the same pair of endpoints, where the three paths are of lengths $p$, $q$, $r$, respectively. We now consider graphs which are $\ga$-free. To state our results, we need some symbols for given graphs. For $1\leq k \leq n$, let $S_{n,k}$ be the graph of order $n$ obtained by joining each vertex of the complete graph $K_k$ to $n-k$ isolated vertices, i.e., $S_{n,k}=K_k\triangledown \overline{K}_{n-k}$. Let $F_{m,t}$ be the graph of size $m$ obtained by joining a vertex with maximum degree of $S_{\frac{m-t+3}{2},2}$ to $t$ isolated vertices (see Figure \ref{Fmt}). Li, Sun and Wei \cite{li2022} consider the case for $F=\ga$.




\begin{theo}{\rm (\cite{li2022})}\label{ga}
	Let $G$ be a graph in $\mG(m,\ga)$ where $m\geq 8$. Then $\rho(G)\leq \frac{1+\sqrt{4m-3}}{2}$, and equality holds if and only if $G\cong S_{\frac{m+3}{2},2}$.
\end{theo}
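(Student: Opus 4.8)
The plan is to let $G$ be a graph of maximum spectral radius in $\mG(m,\ga)$ and to extract its structure from the eigenvalue equation at a dominant vertex. Fix a Perron eigenvector $x$ normalised by $\xu=\max_v x_v=1$, attained at $u^*$, write $N=N(u^*)$ and $\overline{N[u^*]}=V(G)\setminus N[u^*]$, let $D_2$ be the vertices at distance exactly two from $u^*$, and for $w\ne u^*$ set $c_w=|N\cap N(w)|$; for $w\in N$ this $c_w$ equals the number $t_w$ of triangles containing the edge $u^*w$. Expanding $\rho^2=\rho^2\xu=\sum_{v\in N}\sum_{z\in N(v)}x_z$ and peeling off the walks that return to $u^*$ will give
\[
\rho(\rho-1)=d(u^*)+\sum_{w\in N}(t_w-1)x_w+\sum_{w\in D_2}c_w x_w .
\]
Since the desired bound $\rho\le\frac{1+\sqrt{4m-3}}{2}$ is exactly $\rho(\rho-1)\le m-1$, and since the edge partition $m=d(u^*)+e(N)+e(N,\overline{N[u^*]})+e(\overline{N[u^*]})$ together with $\sum_{w\in D_2}c_wx_w\le e(N,D_2)\le e(N,\overline{N[u^*]})$ absorbs the last sum, the whole problem reduces to proving the single estimate $\sum_{w\in N}(t_w-1)x_w\le e(N)-1$.

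The structural input from forbidding $\ga$ is that $G[N(u)]$ is a disjoint union of stars and triangles for every vertex $u$. Indeed, a path $v_1v_2v_3v_4$ inside $N(u)$ would produce $\ga$ with hubs $u$ and $v_2$ via the internally disjoint paths $uv_2$, $uv_1v_2$ and $uv_4v_3v_2$, so $G[N(u)]$ contains no $P_4$; and a graph with no $P_4$ as a subgraph is precisely a disjoint union of stars and triangles. When $G[N(u^*)]$ has no triangle component it is a star forest, hence acyclic, so $e(N)\le\nu-1$ where $\nu$ counts the non-isolated vertices of $G[N(u^*)]$. The crude bound $\sum_{w\in N}(t_w-1)x_w\le\sum_{w\in N}t_w-\nu=2e(N)-\nu\le e(N)-1$ then closes the estimate and yields $\rho(\rho-1)\le m-1$ in this case.

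The configuration I expect to be the main obstacle is a triangle component $\{a,b,c\}$ of $G[N(u^*)]$, equivalently a copy of $K_4$ on $N[u^*]$, where the crude estimate breaks down: $K_4$ itself satisfies $\rho(\rho-1)=6>5=m-1$, and is ruled out here only by the hypothesis $m\ge8$. The task will be to show that forbidding $\ga$ forces this $K_4$ to meet the rest of $G$ only through pendant-type edges—any outside vertex with two neighbours in $\{u^*,a,b,c\}$ recreates $\ga$—so that $G$ decomposes as a $K_4$ plus an almost disjoint remainder and is thereby kept far from book-like, after which a direct estimate valid for all $m\ge8$ gives $\rho(\rho-1)<m-1$ strictly. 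Alternatively, since we only need the maximiser, one may attempt to delete the triangle and re-attach the freed edge by a local switching that preserves $\ga$-freeness while strictly increasing $\rho$, contradicting maximality; verifying that this degenerate case stays strictly below the bound is, either way, the technical heart of the argument.

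Finally, in the surviving case $G[N(u^*)]$ is a star forest and the equality $\rho(\rho-1)=m-1$ can be traced back through the chain of inequalities. It forces $e(\overline{N[u^*]})=0$, forces the star forest to be a single star $K_{1,t}$ with no isolated vertices in $N$ (via $e(N)=\nu-1$), forces $x_w=1$ at the centre of that star, and forces $D_2=\varnothing$. Reassembling these constraints identifies $G$ with the book graph $S_{\frac{m+3}{2},2}$. This graph has size $m$ only for odd $m$, so the bound is attained exactly when $m$ is odd and is merely approached when $m$ is even—precisely the gap later closed by $F_{m,1}$ and $F_{m,2}$. The two quantities to keep under control throughout are the threshold $m\ge8$, needed only to dispose of the $K_4$ configuration, and the smallness of the low-degree eigenvector entries, which feeds both the degenerate-case estimate and the proof that no vertex survives at distance two from $u^*$.
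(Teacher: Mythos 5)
The paper does not prove this theorem at all: it is quoted from \cite{li2022}, so there is no in-house proof to measure you against. Your setup, however, is exactly the machinery this paper uses for its own Theorem \ref{c5} — the second-order eigenvalue equation at a dominant vertex $u^*$ (compare (\ref{maineq}) and (\ref{c5eq})), the edge partition $m=d(u^*)+e(N)+e(N,\overline{N[u^*]})+e(\overline{N[u^*]})$, and the observation that $G[N(u^*)]$ is $P_4$-free and hence a disjoint union of stars and triangles (Lemma \ref{c5lem1}) — and all of those steps of yours check out, including the reduction of the bound to $\rho(\rho-1)\leq m-1$ and the star-forest computation $\sum_{w\in N}(t_w-1)x_w\leq 2e(N)-\nu\leq e(N)-1$.

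The genuine gap is the triangle-component case, which you correctly identify as the heart of the matter but then only describe rather than prove. Your accounting loses exactly the needed unit there: a triangle component contributes up to $3$ to $\sum_{w\in N}(t_w-1)x_w$ against only $3$ edges, so the argument survives only if $G[N(u^*)]$ has at least one star component or isolated vertices carrying total eigenvector weight at least $1$; when $G[N(u^*)]$ consists solely of triangles and light isolated vertices, nothing you have written rules out $\rho(\rho-1)>m-1$. Your two proposed remedies (``show the $K_4$ meets the rest only through pendant-type edges, then a direct estimate'' and ``a local switching that increases $\rho$'') are plans, not arguments — no estimate is carried out, and the hypothesis $m\geq 8$, which you yourself note is what kills $K_4$, never actually enters a computation. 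For comparison, the paper's own treatment of the analogous situation (Lemma \ref{ck}) needs a quantitative bound $x_{u_1}=\xu/(\rho^*-2)$ on the triangle vertices together with a lower bound on $\rho^*$ to force $k=0$; some such explicit step is unavoidable here. The equality analysis is likewise only sketched: forcing $x_w=1$ on all of $D_2$ does not by itself give $D_2=\varnothing$ without a further argument (e.g., that such a $w$ would have to dominate $N(u^*)$ and create a forbidden subgraph). As written, the proposal is a correct and well-aimed outline with its hardest case left open.
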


Considering that the extremal graph $S_{\frac{m+3}{2},2}$ has odd size $m$, for even $m$, the upper bound is not sharp. We study the extremal graph attaining the maximum spectral radius among $\mG(m,\ga)\setminus \{S_{\frac{m+3}{2},2}\}$, and obtain the following sharp results.

\begin{theo}\label{c5}
	Let $G\in \mG(m,\ga)\setminus \{S_{\frac{m+3}{2},2}\}$ with $m\geq 22$. Then $\rho(G)\leq \left\{\begin{matrix}
	\rho(F_{m,1}),	&\text{if } m \text{ is even}, \\ 
	\rho(F_{m,2}),	&\text{if } m \text{ is odd},
	\end{matrix}\right.$	
	where $\rho(F_{m,t})$ is the largest root of $x^4-mx^2-(m-t-1)x+\frac{t}{2}\cdot (m-t-1)=0$ for $t=1,2$. Moreover, the equality holds if and only if $G\cong \left\{\begin{matrix}
	F_{m,1},	&\text{if } m \text{ is even}, \\ 
	F_{m,2},	&\text{if } m \text{ is odd}.
	\end{matrix}\right.$
\end{theo}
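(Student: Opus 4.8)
The plan is to work with an extremal graph and combine a Perron-eigenvector analysis with the forbidden-subgraph structure of $\ga$-free graphs, finishing with a parity-driven optimization over the family $\{F_{m,t}\}$. Let $G$ attain the maximum spectral radius in $\mG(m,\ga)\setminus\{S_{\frac{m+3}{2},2}\}$, let $\rho=\rho(G)$, and let $\mathbf{x}$ be the positive Perron eigenvector normalized so that $\xu=\max_v x_v=1$, where $u^*$ is a vertex of maximum weight. Since $F_{m,1}$ (for even $m$) and $F_{m,2}$ (for odd $m$) both lie in this set, the first step is to record $\rho\geq\rho(F_{m,1})$ (resp.\ $\rho(F_{m,2})$) and to turn the defining quartic into a clean estimate: substituting $x=\sqrt m$ into $x^4-mx^2-(m-t-1)x+\tfrac t2(m-t-1)$ gives a negative value in the relevant range, so $\rho(F_{m,t})>\sqrt m$ and hence $\rho>\sqrt m$, i.e.\ $\rho^2>m$. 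Combined with the upper bound $\rho\leq\frac{1+\sqrt{4m-3}}{2}$ from Theorem~\ref{ga}, this traps $\rho$ in a narrow window just below $\sqrt m+\tfrac12$.

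Next I would extract structure from the eigenvalue equation at $u^*$. Writing $\rho^2=d(u^*)+\sum_{w\neq u^*}|N(u^*)\cap N(w)|\,x_w$ and using $x_w\leq1$, I would first show $d(u^*)$ is large, of order $\tfrac m2$, so $u^*$ plays the role of a spine vertex. The core is a structural lemma from $\ga$-freeness: reading off the definition of $\ga$, if an edge $u^*a$ lies in a triangle $u^*ab$, then there can be no edge $w_2w_3$ with $w_2\in N(u^*)\setminus\{a,b\}$, $w_3\in N(a)\setminus\{u^*,b\}$ and $w_2\neq w_3$, as this would complete a length-$3$ path between $u^*$ and $a$ and produce a copy of $\ga$. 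This forces the edges lying outside $u^*$ to be severely constrained, and from it I would deduce that there is a single distinguished vertex $v^*$ (the second spine) with $|N(u^*)\cap N(v^*)|$ large, while every other vertex $w$ has only a bounded number of common neighbours with $u^*$. Consequently $\sum_{w\neq u^*}|N(u^*)\cap N(w)|\,x_w$ is dominated by the single term from $v^*$ plus a negligible remainder, which pins $x_{v^*}$ close to $1$, makes $N(u^*)\cap N(v^*)$ behave like the pages of a book, and forces all remaining vertices to carry small eigenvector weight.

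With the book $B=\{u^*,v^*\}\cup(N(u^*)\cap N(v^*))$ identified, the third step is to show that the only edges of $G$ not inside $B$ are pendant edges at $u^*$. Here I would run a sequence of local-surgery (edge-switching) arguments: any edge incident to a low-weight vertex that is not a pendant at $u^*$ can be rerouted onto $u^*$ so as to strictly increase the Rayleigh quotient $\mathbf{x}^{\top}A(G)\mathbf{x}$, hence $\rho$, while preserving the size $m$ and (after checking that no theta is created) $\ga$-freeness; extremality of $G$ then forbids such configurations. Iterating drives $G$ to the shape $F_{m,t}$. The final step is the optimization over $t$: from $f_{t+1}(x)-f_t(x)=x+\tfrac m2-t-1>0$ in the relevant range one gets that $\rho(F_{m,t})$ is strictly decreasing in $t$, so the smallest admissible $t$ wins. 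Since the book part of $F_{m,t}$, namely $S_{\frac{m-t+3}{2},2}$, always has an odd number $m-t$ of edges, $m$ and $t$ have opposite parities; this forces $t\geq1$ (odd) when $m$ is even, and $t\geq2$ (even, since $t=0$ gives the excluded graph $S_{\frac{m+3}{2},2}$) when $m$ is odd. Thus $t=1$ for even $m$ and $t=2$ for odd $m$, with uniqueness of the extremal graph.

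I expect the main obstacle to be the structural classification in the second and third steps: proving from $\ga$-freeness alone that exactly one vertex accumulates the common neighbourhood, and that every residual edge must be a pendant at $u^*$, with enough quantitative control on the small eigenvector weights to make each edge-switch strictly increase $\rho$ without introducing a forbidden $\ga$. By contrast, the parity bookkeeping and the monotonicity of $\rho(F_{m,t})$ in $t$ are routine once the book-plus-pendants shape has been established.
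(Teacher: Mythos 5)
Your overall architecture does track the paper's: extremal graph $G^*$, Perron vector at a maximum-weight vertex $u^*$, structural restrictions from $\ga$-freeness near $u^*$, reduction to the family $F_{m,t}$, and a final parity-plus-polynomial comparison. The endgame is essentially correct (with one small repair: the consecutive difference $f_{t+1}(x)-f_t(x)=x+\tfrac m2-t-1$ is \emph{not} positive for $t$ near $m$, so "$\rho(F_{m,t})$ decreasing in $t$" should be replaced by the direct comparisons $f_t-f_1$ and $f_t-f_2$, which is exactly what Lemma \ref{rho}(i) does). But the middle of your argument — which you yourself flag as "the main obstacle" — is not a technical wrinkle; it is the entire content of Section 4, and two of the specific claims you rest it on are problematic. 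First, your quantitative anchor $\rho>\sqrt m$ is too weak to drive the counting: the paper's estimates all run off $\rho^{*2}-\rho^*>m-2$, obtained from $\rho^*\geq\rho(F_{m,1})>\rho(S_{\frac m2+1,2})=\frac{1+\sqrt{4m-7}}{2}$ via Lemma \ref{snk}. With only $\rho^2>m$, the slack against the identity $m=|N(u^*)|+e(N(u^*))+e(W,N(u^*))+e(W)$ is of order $\sqrt m$ rather than $2$, and none of the conclusions $e(W)\leq 1$, $c\leq 1$, $k=0$ follow; the upper window from Theorem \ref{ga} does not substitute for this.

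Second, the claim that a single vertex $v^*$ accumulates the common neighbourhood of $u^*$, with all other vertices having boundedly many common neighbours, is simply not a consequence of $\ga$-freeness: the graphs $H_{t,s}\circ K_{1,r}$ with $s>0$ are $\ga$-free, and every vertex of $S$ sits outside the book yet has up to $t$ common neighbours with $u^*$. These configurations are precisely what the paper must exclude by a separate eigenvector computation (Lemma \ref{s=0}, showing $h(\rho^*)>0$ for $h(x)=(r-2)(x^2-r)-(r-1)(x+r)$ after first forcing $r\geq3$ via Lemma \ref{-v}); your generic "reroute any residual edge onto $u^*$" surgery does not apply to them, since their edges go to isolated vertices of $G^*[N(u^*)]$ and rerouting creates new triangles through $u^*$ whose $\ga$-freeness is exactly what must be checked. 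Likewise, edges inside $W=V(G^*)\setminus N[u^*]$ and triangle components of $G^*[N(u^*)]$ are eliminated in the paper only through the case analysis of Lemmas \ref{c5ew} and \ref{ck}, not by a single switching principle. So while your roadmap points in the right direction, the structural classification — the heart of the proof — is asserted rather than established, and the intermediate statements you propose to establish it are false as stated for the class of candidate graphs.
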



%

\section{Preliminaries}
\hspace{1.5em}Let $G$ be a simple graph with vertex set $V(G)$ and edge set $E(G)$. For subsets $S$, $T$ of $V(G)$, we write $E(S,T)$ for the set of edges with one endpoint in $S$ and the other in $T$. Let $e(S,T)=|E(S,T)|$, and $e(S,S)$ is simplified by $e(S)$. For a vertex $v\in V(G)$, let $N(v)$ be the neighborhood of $v$ in $G$ and $N[v]=N(v)\cup \{v\}$. In particular, let $N_S(v)=N(v)\cap S$ and $d_S(v)=|N_S(v)|$ for $S\subset V(G)$. In the following, we introduce some basic lemmas. 

\begin{lem}{\rm \cite{niki2009c4}}\label{moveedge}
	Let $A$ and $A'$ be the adjacency matrices of two connected graphs $G$ and $G'$ on the same vertex set. Suppose that $N_G(u) \subsetneqq N_{G'}(u)$ for some vertex $u$. If the Perron vector $X$ of $G$ satisfies $X^TA'X\geq X^TAX$, then $\rho(G')>\rho(G)$.
\end{lem}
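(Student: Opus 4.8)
The plan is to let $G$ be a graph attaining the maximum spectral radius in $\mG(m,\ga)\setminus\{S_{\frac{m+3}{2},2}\}$ and to prove $G\cong F_{m,1}$ when $m$ is even and $G\cong F_{m,2}$ when $m$ is odd. First I would check that $F_{m,t}$ actually lies in this class: its only vertices of degree $\geq 3$ are the two ``spine'' vertices $w_1,w_2$ of the underlying $S_{\frac{m-t+3}{2},2}$, and between $w_1$ and $w_2$ there is no internally disjoint path of length $3$, so $F_{m,t}$ is $\ga$-free; it is not isomorphic to $S_{\frac{m+3}{2},2}$ because of its pendant vertices. Hence $\rho(G)\geq\rho(F_{m,t})$. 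Evaluating the quartic $p_t(x)=x^4-mx^2-(m-t-1)x+\frac{t}{2}(m-t-1)$ at $x=\sqrt m$ gives $p_t(\sqrt m)=(m-t-1)(\tfrac t2-\sqrt m)<0$, so $\rho(F_{m,t})>\sqrt m$, and therefore $\rho:=\rho(G)>\sqrt m$, i.e.\ $\rho^2>m$. I normalize the Perron vector $X$ so that $\xu=\max_v x_v=1$ for a vertex $u^*$.

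The heart of the argument is to show that $G$ is a join $K_1\triangledown H$ in which $u^*$ is a dominating vertex and $H=G-u^*$ is a disjoint union of stars and triangles. Writing $N_2=V(G)\setminus N[u^*]$, I would start from the walk identity
\[
\rho^2=\rho^2\xu=d(u^*)+\sum_{vw\in E(N(u^*))}(x_v+x_w)+\sum_{w\in N_2}|N(u^*)\cap N(w)|\,x_w,
\]
which together with $\rho^2>m$ and $e(G)=m$ forces the edges and the $X$-weight of $G$ to concentrate at $u^*$, so that $N_2$ carries almost nothing; I would then upgrade this to $N_2=\varnothing$, making $u^*$ adjacent to every other vertex. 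Once $u^*$ is dominating, the $\ga$-free condition becomes a clean local condition on $H$: a path $a\!-\!p\!-\!c\!-\!b$ ($P_4$) in $H$ would make $u^*p$, $u^*\!-\!a\!-\!p$ and $u^*\!-\!b\!-\!c\!-\!p$ a copy of $\ga$ in $G$, while a $C_4$ $\,p\!-\!b\!-\!c\!-\!q\!-\!p\,$ in $H$ would make the edge $pq$ together with $p\!-\!u^*\!-\!q$ and $p\!-\!b\!-\!c\!-\!q$ a copy of $\ga$. Thus $H$ has no $P_4$-subgraph (which already rules out $C_4$), and a connected graph with no $P_4$-subgraph is either a star $K_{1,s}$ or a triangle $K_3$; hence every component of $H$ is a star or a triangle, and conversely any such $H$ keeps $G$ $\ga$-free.

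It then remains to optimize $\rho$ over all $K_1\triangledown H$ of size $m$ with $H$ a union of stars and triangles. I would argue through three reductions, each via the edge-moving Lemma \ref{moveedge} or a direct Rayleigh-quotient comparison: replacing a triangle component by a star on the same number of edges strictly increases $\rho$; merging two star components into one (reassigning the vertex saved as an isolated vertex of $H$, i.e.\ a pendant at $u^*$, to preserve $n$ and $m$) strictly increases $\rho$. Hence the optimum has $H=K_{1,s}\cup\overline K_t$, that is $G\cong F_{m,t}$ for some $t\geq 0$ with $t\not\equiv m\pmod 2$, the parity being forced by $s=\frac{m-t-1}{2}\in\mathbb Z$. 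Finally, comparing the largest roots of the quartics $p_t$ shows $\rho(F_{m,t})$ is strictly decreasing in $t$ in the relevant range: if $\rho_t$ is the largest root of $p_t$, a short computation gives $p_{t+2}(\rho_t)=2\rho_t+(m-2t-3)>0$, whence $\rho_{t+2}<\rho_t$. Since $t=0$ produces the excluded graph $S_{\frac{m+3}{2},2}$ (and is anyway impossible for even $m$ by parity), the smallest admissible $t$ wins: $t=1$ for even $m$ and $t=2$ for odd $m$, giving $G\cong F_{m,1}$ respectively $F_{m,2}$, as claimed.

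The step I expect to be the main obstacle is the \emph{dominating-vertex} reduction, namely proving $N_2=\varnothing$ in the extremal graph. Unlike the translation of the forbidden subgraph into the structure of $H$, this step is genuinely global: one must show that any edge far from $u^*$ can be rotated onto $u^*$ without creating a copy of $\ga$ while not decreasing the Rayleigh quotient, and the hypothesis $m\geq 22$ should enter precisely here, via $\rho^2>m$ and the displayed identity, to force $\sum_{w\in N_2}x_w$ small enough that such rotations strictly increase $\rho$. By contrast, the reductions eliminating triangle components and multiple stars are conceptually routine, but they must be carried out carefully so as to keep the graph $\ga$-free and of size exactly $m$ at every stage.
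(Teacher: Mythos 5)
Your proposal does not prove the statement it was assigned. The statement is Lemma \ref{moveedge}, the edge-rotation/Rayleigh-quotient lemma of Nikiforov: for connected graphs $G$, $G'$ on the same vertex set with $N_G(u)\subsetneqq N_{G'}(u)$ for some $u$, if the Perron vector $X$ of $G$ satisfies $X^TA'X\geq X^TAX$, then $\rho(G')>\rho(G)$. What you wrote instead is a proof plan for Theorem \ref{c5} (the characterization of the extremal $\theta_{1,2,3}$-free graphs $F_{m,1}$ and $F_{m,2}$) --- a statement which in the paper is proved \emph{using} Lemma \ref{moveedge} as a tool, and which you yourself invoke by label inside your own argument. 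So as an answer to the question posed, the proposal is entirely off target, and it cannot be repaired by local edits: none of your steps (the walk identity at $u^*$, the $P_4$/$C_4$ exclusion in $H$, the comparison of the quartics $p_t$) bears on the two-matrix eigenvalue comparison that the lemma asserts. Note also that the paper itself gives no proof of this lemma --- it cites \cite{niki2009c4} --- so the expected content was a short self-contained argument, not a structural theorem.

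For the record, the missing proof is brief. Normalize $\|X\|_2=1$. By the Rayleigh principle, $\rho(G')\geq X^TA'X\geq X^TAX=\rho(G)$. If equality held throughout, then $X$, being a nonnegative maximizer of the Rayleigh quotient of $A'$ with $G'$ connected, would be the (positive) Perron vector of $A'$; but then
\begin{equation*}
\rho(G')\,x_u=(A'X)_u=\sum_{v\in N_{G'}(u)}x_v>\sum_{v\in N_G(u)}x_v=(AX)_u=\rho(G)\,x_u,
\end{equation*}
where the strict inequality uses $N_G(u)\subsetneqq N_{G'}(u)$ and the positivity of $X$ (guaranteed by the connectedness of $G$). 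This contradicts $\rho(G')=\rho(G)$, so $\rho(G')>\rho(G)$. Separately, and only as a side remark: even read as a blind sketch of Theorem \ref{c5}, your outline leaves its self-identified crux (forcing $N_2=\varnothing$, i.e.\ the dominating vertex) unproven, whereas the paper gets there through Lemmas \ref{lemcut2}--\ref{s=0} by first bounding $e(W)$ and the component types of $G^*[N(u^*)]$ rather than by rotating edges onto $u^*$ directly.
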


\begin{defi}{\rm (\cite{cve2010})}
	Given a graph $G$, the vertex partition $\Pi$: $V(G)=V_1\cup V_2 \cup \dots \cup V_k$ is said to be an equitable partition if, for each $u \in V_i$, $|V_j \cap N(u)|=b_{ij}$ is a constant depending only on $i, j\ (1 \leq i , j \leq k)$. The matrix $B_{\Pi}=(b_{ij})$ is called the quotient matrix of $G$ with respect to $\Pi$.
\end{defi}
\begin{lem}{\rm (\cite{cve2010})}\label{lemquo}
	Let $\Pi$: $V(G)=V_1\cup V_2 \cup \dots \cup V_k$ be an equitable partition of $G$ with quotient matrix $B_{\Pi}$. Then $det(xI-B_{\Pi})\mid det(xI-A(G))$. Furthermore, the largest eigenvalue of $B_{\Pi}$ is just the spectral radius of $G$.
\end{lem}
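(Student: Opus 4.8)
The plan is to derive both assertions from the single algebraic identity $A(G)S=SB_\Pi$, where $S$ is the $n\times k$ \emph{characteristic matrix} of $\Pi$ whose $j$-th column is the indicator vector of $V_j$. First I would verify this identity directly from the definition of an equitable partition: for $u\in V_i$ the $(u,j)$ entry of $A(G)S$ equals $\sum_{v\in V_j}A(G)_{uv}=|N(u)\cap V_j|=b_{ij}$, whereas the $(u,j)$ entry of $SB_\Pi$ equals $(B_\Pi)_{ij}=b_{ij}$, so the two matrices agree. Conceptually, the identity says that the column space $\mathrm{col}(S)$ is $A(G)$-invariant and that $B_\Pi$ represents the action of $A(G)$ on this subspace in the basis of indicator vectors.

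For the divisibility $\det(xI-B_\Pi)\mid\det(xI-A(G))$ I would orthonormalize. Since the indicator vectors are nonzero with pairwise disjoint supports, $S$ has full column rank and $S^{T}S=\mathrm{diag}(|V_1|,\dots,|V_k|)$ is invertible; set $\hat S=S(S^{T}S)^{-1/2}$, so that $\hat S^{T}\hat S=I_k$. From $A(G)S=SB_\Pi$ one obtains $A(G)\hat S=\hat S\tilde B$ with $\tilde B=(S^{T}S)^{1/2}B_\Pi(S^{T}S)^{-1/2}$, which is similar to $B_\Pi$. Extending $\hat S$ to an orthogonal matrix $Q=[\,\hat S\mid T\,]$, the relation $T^{T}\hat S=0$ forces the lower-left block of $Q^{T}A(G)Q$ to vanish, making it block upper triangular with diagonal blocks $\tilde B$ and $T^{T}A(G)T$. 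Hence $\det(xI-A(G))=\det(xI-\tilde B)\cdot\det(xI-T^{T}A(G)T)=\det(xI-B_\Pi)\cdot\det(xI-T^{T}A(G)T)$, which is exactly the claimed divisibility.

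For the second assertion I would combine this with the Perron--Frobenius theorem. Divisibility shows every eigenvalue of $B_\Pi$ is an eigenvalue of $A(G)$; since $A(G)$ is symmetric its largest eigenvalue is $\rho(G)$, so $\lambda_{\max}(B_\Pi)\le\rho(G)$. For the reverse inequality, note that $B_\Pi$ is a nonnegative matrix, so by Perron--Frobenius its spectral radius $\rho(B_\Pi)$ is an eigenvalue with a nonnegative eigenvector $y$; then $Sy$ is a nonzero nonnegative vector satisfying $A(G)(Sy)=S(B_\Pi y)=\rho(B_\Pi)(Sy)$, so $\rho(B_\Pi)$ is an eigenvalue of $A(G)$ admitting a nonnegative eigenvector. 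Because $G$ is connected, $A(G)$ is irreducible, and Perron--Frobenius forces any eigenvalue possessing a nonnegative eigenvector to equal $\rho(G)$; thus $\rho(B_\Pi)=\rho(G)$. As $\rho(B_\Pi)$ is the largest real eigenvalue of the nonnegative matrix $B_\Pi$, we conclude $\lambda_{\max}(B_\Pi)=\rho(G)$.

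The routine linear algebra, namely the identity $A(G)S=SB_\Pi$ and the block-triangularization of $Q^{T}A(G)Q$, is straightforward. The delicate point is the second assertion, where one must resist assuming that the Perron vector of $A(G)$ is itself constant on the parts of $\Pi$ (this need not hold). The correct route is the opposite one: lift the Perron vector of the quotient $B_\Pi$ \emph{up} through $S$ to manufacture a nonnegative eigenvector of $A(G)$, and only then invoke irreducibility so that Perron--Frobenius pins the corresponding eigenvalue to $\rho(G)$.
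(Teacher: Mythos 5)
The paper does not prove this lemma at all --- it is quoted verbatim from \cite{cve2010} --- so there is no internal proof to compare against and your argument must stand on its own. It does: the identity $A(G)S=SB_{\Pi}$, the orthonormalization $\hat S=S(S^{T}S)^{-1/2}$ (legitimate since $S^{T}S=\mathrm{diag}(|V_1|,\dots,|V_k|)$ is invertible), and the block-triangularization of $Q^{T}A(G)Q$ are all correct and yield the divisibility exactly as in the standard textbook treatment; lifting the Perron vector of $B_{\Pi}$ up through $S$ is likewise the right direction for the second assertion, and your warning against assuming the Perron vector of $A(G)$ is constant on the parts is well taken. (As a side remark, symmetry of $A(G)$ makes $Q^{T}A(G)Q$ block \emph{diagonal}, so your argument in fact shows $B_{\Pi}$ has real spectrum, though this is not needed.)

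The one genuine blemish is that your final step invokes connectedness of $G$, a hypothesis absent from the statement: you use irreducibility of $A(G)$ to pin the eigenvalue of the lifted nonnegative eigenvector $Sy$ to $\rho(G)$. In this paper the assumption is harmless, since the lemma is only applied to connected graphs (the graph $H$ in Section 3 and $F_{m,t}$ in Lemma \ref{rho}), but it can be removed entirely, as follows. Let $z\geq 0$ be a nonnegative eigenvector of $A(G)$ for $\rho(G)$ (the Perron vector of a component attaining $\rho(G)$, extended by zeros). Since the parts cover $V(G)$, some indicator vector $\mathbf{1}_{V_j}$ satisfies $\langle z,\mathbf{1}_{V_j}\rangle>0$, so the orthogonal projection $Pz$ of $z$ onto $\mathrm{col}(S)$ is nonzero; because $\mathrm{col}(S)$ is $A(G)$-invariant and $A(G)$ is symmetric, $P$ commutes with $A(G)$, hence $A(G)(Pz)=\rho(G)\,Pz$ and $\rho(G)$ is an eigenvalue of $A(G)$ restricted to $\mathrm{col}(S)$, i.e.\ of $B_{\Pi}$. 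Combined with your divisibility statement, which gives $\lambda_{\max}(B_{\Pi})\leq\rho(G)$, this yields $\lambda_{\max}(B_{\Pi})=\rho(G)$ with no irreducibility hypothesis.
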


\begin{lem}{\rm (\cite{niki2009c4})}\label{lemsnk}
	Let $S_n^k$ be the star of order $n$ with $k$ disjoint edges within its independent set. Then $\rho(S_n^k)$ is the largest root of the equation $x^3-x^2-(n-1)x+n-1-2k=0$.
	
\end{lem}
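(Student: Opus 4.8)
The plan is to exhibit an equitable partition of $V(S_n^k)$ with three classes and then invoke Lemma \ref{lemquo}, so that $\rho(S_n^k)$ is realized as the largest eigenvalue of a $3\times 3$ quotient matrix whose characteristic polynomial is precisely the stated cubic. Recall the structure of $S_n^k$: a center $c$ adjacent to all $n-1$ leaves, with $k$ disjoint edges placed among the leaves. Let $V_1=\{c\}$, let $V_2$ be the set of the $2k$ leaves incident to one of the $k$ independent-set edges (the matched leaves), and let $V_3$ be the set of the remaining $n-1-2k$ leaves (pendant vertices adjacent only to $c$). First I would verify that $\Pi:V(S_n^k)=V_1\cup V_2\cup V_3$ is equitable in the sense of the preceding definition: the center is adjacent to all $2k$ vertices of $V_2$ and all $n-1-2k$ vertices of $V_3$; each matched leaf is adjacent to $c$ and to exactly one other matched leaf (its partner) and to no pendant; and each pendant leaf is adjacent only to $c$. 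Thus the number of neighbors a vertex has in each class depends only on the class it belongs to, which is exactly the equitable condition (equivalently, by the symmetry of $S_n^k$, the Perron vector is constant on $V_2$ and constant on $V_3$).

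With this partition the quotient matrix is
\[
B_{\Pi}=\begin{pmatrix} 0 & 2k & n-1-2k \\ 1 & 1 & 0 \\ 1 & 0 & 0 \end{pmatrix},
\]
and the second step is the routine cofactor expansion giving $\det(xI-B_{\Pi})=x^3-x^2-(n-1)x+(n-1-2k)$; the coefficient of $x$ arises as $-\big(2k+(n-1-2k)\big)=-(n-1)$, which is what collapses the computation into the stated cubic. By Lemma \ref{lemquo}, $\det(xI-B_{\Pi})$ divides $\det(xI-A(S_n^k))$ and, more to the point, the largest eigenvalue of $B_{\Pi}$ equals $\rho(S_n^k)$. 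Hence $\rho(S_n^k)$ is the largest root of $x^3-x^2-(n-1)x+n-1-2k=0$, as claimed.

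There is essentially no hard step here: the only things to get right are the verification that $\Pi$ is equitable and the bookkeeping in the determinant. I would note the mild degeneracy $n-1-2k=0$, in which $V_3$ is empty and $\Pi$ has only two classes; there the same computation goes through with a factor of $x$ splitting off, and since the Perron root is positive the largest root of the cubic still coincides with $\rho(S_n^k)$. For the application in this paper one has $k=1$ and $n=m$, so $n-1-2k=m-3>0$ and the three-class partition applies directly.
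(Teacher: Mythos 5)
Your proof is correct. Note that the paper does not prove this lemma at all: it is quoted from Nikiforov's paper \cite{niki2009c4} as a known result, so there is no internal proof to compare against. What you have done is supply a self-contained derivation using exactly the machinery the paper itself deploys elsewhere (the equitable partition definition and Lemma \ref{lemquo}, which the authors use to compute $\rho(H)$ in the proof of Theorem \ref{k2,r+1} and $\rho(F_{m,t})$ in Lemma \ref{rho}), so your argument is stylistically and technically consistent with the paper. The details check out: the partition into $\{c\}$, the $2k$ matched leaves, and the $n-1-2k$ pendant leaves is equitable, the quotient matrix is as you wrote, and expanding
\[
\det\begin{pmatrix} x & -2k & -(n-1-2k) \\ -1 & x-1 & 0 \\ -1 & 0 & x \end{pmatrix}
= x^2(x-1) - 2kx - (n-1-2k)(x-1) = x^3 - x^2 - (n-1)x + (n-1-2k)
\]
confirms the stated cubic; since $S_n^k$ is connected, Lemma \ref{lemquo} identifies its spectral radius with the largest eigenvalue of $B_{\Pi}$. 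Your handling of the degenerate case $n-1=2k$ (where the cubic factors as $x$ times the quadratic from the two-class quotient, leaving the largest root unchanged) is a careful touch, and you correctly observe that it is irrelevant for the paper's applications, which use $k=1$, $n=m$ with $m\geq 22$ (and $k=2$, $n=m-1$ for $S_{m-1}^2$), both well inside the nondegenerate regime.
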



\begin{lem}{\rm \cite{niki2010pn}}\label{snk}
	Let $S_{n,k}$ be the graph defined in Section \ref{sec1}. Then $\rho(S_{n,k})$ is the largest root of the equation $x^2-(k-1)x-k(n-k)=0$.\\
\end{lem}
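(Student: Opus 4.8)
The plan is to exhibit the obvious equitable partition of $S_{n,k}$ and read off the quotient matrix, then invoke Lemma \ref{lemquo}. Recall that $S_{n,k}=K_k\triangledown \overline{K}_{n-k}$, so its vertex set splits naturally into the set $V_1$ of the $k$ clique vertices and the set $V_2$ of the $n-k$ independent vertices. First I would verify that the partition $\Pi\colon V(S_{n,k})=V_1\cup V_2$ is equitable. For any $u\in V_1$, it is adjacent to the other $k-1$ vertices of the clique and to all $n-k$ vertices of $\overline{K}_{n-k}$, so $|V_1\cap N(u)|=k-1$ and $|V_2\cap N(u)|=n-k$. For any $u\in V_2$, it is adjacent to all $k$ clique vertices and to none of the remaining independent vertices, so $|V_1\cap N(u)|=k$ and $|V_2\cap N(u)|=0$. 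Since these counts depend only on the class containing $u$, the partition is equitable.

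Next I would record the quotient matrix
\[
B_\Pi=\begin{pmatrix} k-1 & n-k\\ k & 0\end{pmatrix},
\]
and compute its characteristic polynomial
\[
\det(xI-B_\Pi)=\det\begin{pmatrix} x-(k-1) & -(n-k)\\ -k & x\end{pmatrix}=x\bigl(x-(k-1)\bigr)-k(n-k)=x^2-(k-1)x-k(n-k).
\]
By Lemma \ref{lemquo}, the largest eigenvalue of $B_\Pi$ equals the spectral radius of $S_{n,k}$, so $\rho(S_{n,k})$ is precisely the largest root of $x^2-(k-1)x-k(n-k)=0$, as claimed.

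There is essentially no obstacle here: the entire argument is a one-line application of the equitable-partition machinery of Lemma \ref{lemquo}, and the only point requiring (immediate) verification is that $\Pi$ is equitable, which follows at once from the join structure of $S_{n,k}$. If one prefers to avoid citing Lemma \ref{lemquo}, the same conclusion can be reached directly: by symmetry the Perron vector is constant on $V_1$ and constant on $V_2$, say with values $a$ and $b$, and the eigenvalue equations $\rho a=(k-1)a+(n-k)b$ and $\rho b=ka$ reproduce the matrix $B_\Pi$ and hence the same quadratic. Either route yields the stated characterization of $\rho(S_{n,k})$.
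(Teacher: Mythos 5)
Your proposal is correct; the paper cites this lemma from \cite{niki2010pn} without giving a proof, and your equitable-partition computation is precisely the standard argument, using the same quotient-matrix machinery (Lemma \ref{lemquo}) that the paper itself applies elsewhere (e.g., to the graph $H$ in the proof of Theorem \ref{k2,r+1} and to $F_{m,t}$ in Lemma \ref{rho}). The quotient matrix, its characteristic polynomial $x^2-(k-1)x-k(n-k)$, and the alternative symmetry argument via the Perron vector are all accurate, so there is nothing to correct.
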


\begin{lem}{\rm \cite{sun2020}}\label{-v}
	Let $G$ be a graph and let $v$ be a vertex in $V(G)$ with $d_G(v)\geq 1$. Then $\rho(G)\leq \sqrt{\rho^2(G-v)+2d_G(v)-1}$. Equality holds if and only if either $G\cong K_n$ or $G\cong K_{1,n-1}$ with $d_G(v)=1$.
\end{lem}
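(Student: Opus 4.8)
The plan is to use the Perron vector of $G$ and reduce the claim to a single scalar inequality controlling the Perron weight at $v$. First I would reduce to the case that $G$ is connected: if $v$ lies in a component not attaining $\rho(G)$ then $\rho(G-v)=\rho(G)$ while $d_G(v)\ge 1$, so the inequality is strict; otherwise I restrict to the component of $v$, where the Perron vector is strictly positive. Write $\rho=\rho(G)$, $\lambda=\rho(G-v)$, $d=d_G(v)$, let $X=(x_u)$ be the unit Perron vector of $G$, put $A'=A(G-v)$ and let $Y$ be the restriction of $X$ to $V(G)\setminus\{v\}$. From $A(G)X=\rho X$ the equation at $v$ reads $\sum_{u\in N(v)}x_u=\rho x_v$, and writing $(A'Y)_i=\rho x_i-x_v$ for $i\in N(v)$ and $(A'Y)_i=\rho x_i$ otherwise, summing squares gives the exact identity
\[ \|A'Y\|^2=\rho^2(1-3x_v^2)+d\,x_v^2. \]

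Next I would feed in the only genuinely inequality-producing step, the Rayleigh bound $\|A'Y\|^2\le\lambda^2\|Y\|^2=\lambda^2(1-x_v^2)$ (valid since $\lambda$ is the spectral radius, hence the operator norm, of $A'$). Combined with the identity above this yields $\rho^2(1-3x_v^2)+d\,x_v^2\le\lambda^2(1-x_v^2)$, i.e. $\rho^2-\lambda^2\le(3\rho^2-\lambda^2-d)x_v^2$. A direct computation then shows that this, together with the upper bound
\[ x_v^2\le\frac{2d-1}{2\rho^2+d-1}, \]
implies exactly $\rho^2\le\lambda^2+2d-1$: substituting the Rayleigh bound for $\lambda^2$ reduces the target to $(2d-1)-x_v^2(2\rho^2+d-1)\ge 0$. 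Thus everything comes down to establishing this last bound on $x_v^2$.

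Proving $x_v^2\le(2d-1)/(2\rho^2+d-1)$ is the heart of the argument, and I expect it to be the main obstacle. The plan is to normalise $z_u=x_u/x_v$ (so $z_v=1$, $\sum_u z_u^2=x_v^{-2}$, and $\sum_{i\in N(v)}z_i=\rho$) and to bound $\sum_u z_u^2$ from below. Using the neighbour equations $\rho z_i=1+\sum_{j\in N(i)\setminus\{v\}}z_j$ one gets the identity $\rho^2\sum_{i\in N(v)}z_i^2=2\rho^2-d+\sum_{i\in N(v)}(\rho z_i-1)^2$, and Cauchy--Schwarz applied to $\sum_{i\in N(v)}(\rho z_i-1)=\rho^2-d$ controls the last sum. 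The delicate point is that this alone only reproduces the weaker bound $x_v^2\le d/(\rho^2+d)$, which is tight at $K_n$ but not at the star; to recover the sharp constant one must also account for the Perron weight on the non-neighbours of $v$, i.e. bound $\sum_{i\notin N[v]}z_i^2$ from below. I would track how the ``excess'' $\rho^2-d=\sum_{i\in N(v)}(\rho z_i-1)$ is distributed between edges inside $N(v)$ and edges from $N(v)$ outward, which is exactly the interplay that makes both $K_n$ (all excess inside $N(v)$) and the star with pendant $v$ (all excess flowing outward) extremal. Projecting $z$ onto $\mathrm{span}\{e_v,\mathbf 1_{N(v)},A'\mathbf 1_{N(v)}\}$ computes these contributions in terms of $b^{\mathsf T}A'b$ and $\|A'b\|^2$ with $b=\mathbf 1_{N(v)}$, and is exact for both extremal graphs; closing the general estimate (relating $b^{\mathsf T}A'b$ and $\|A'b\|^2$ back to $\rho$ and $d$) is precisely the technical crux.

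Finally, for the equality characterisation I would trace the two inequalities backward. Equality forces $\|A'Y\|=\lambda\|Y\|$, so $Y$ lies in the eigenspace of $(A')^2$ for $\lambda^2$, and it forces equality in the Cauchy--Schwarz steps, so all neighbours of $v$ carry equal Perron weight and the neighbourhood structure is rigid. Splitting according to whether $N(v)$ induces edges (giving $G\cong K_n$) or not (giving the star $K_{1,n-1}$ with $d_G(v)=1$) then yields precisely the two extremal graphs.
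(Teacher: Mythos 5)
The paper does not prove this lemma at all --- it is quoted verbatim from Sun and Das \cite{sun2020} --- so your attempt must stand on its own, and it does not. Your setup is sound: the identity $\|A'Y\|^2=\rho^2(1-3x_v^2)+d\,x_v^2$ is correct (for $i\in N(v)$ one has $(A'Y)_i=\rho x_i-x_v$, and summing squares with $\sum_{i\in N(v)}x_i=\rho x_v$ gives exactly this), the Rayleigh step $\|A'Y\|^2\le\lambda^2(1-x_v^2)$ is valid, and I verified your algebra: writing $s=\rho^2-\lambda^2$, the two facts $s\le(3\rho^2-\lambda^2-d)x_v^2$ and $x_v^2\le(2d-1)/(2\rho^2+d-1)$ combine to give $s(2\rho^2-d)\le(2d-1)(2\rho^2-d)$, hence $s\le 2d-1$, as you claim. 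Your self-diagnosis is also accurate: the Cauchy--Schwarz route yields only $x_v^2\le d/(\rho^2+d)$, and a direct comparison, $d(2\rho^2+d-1)-(2d-1)(\rho^2+d)=\rho^2-d^2$, shows that this weaker bound implies the needed one precisely when $\rho\le d$. So your argument is complete in the regime $\rho(G)\le d_G(v)$ (which covers $K_n$) and incomplete exactly in the regime $\rho(G)>d_G(v)$ --- which contains the other equality case, the star with $v$ a pendant vertex.

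The genuine gap is that the pivotal inequality $x_v^2\le(2d-1)/(2\rho^2+d-1)$ is never proved. Your final paragraph on it is a plan, not an argument: you propose projecting onto $\mathrm{span}\{e_v,\mathbf 1_{N(v)},A'\mathbf 1_{N(v)}\}$ and then concede that ``closing the general estimate\dots is precisely the technical crux.'' This is not a minor technicality that a referee can wave through. The bound is tight simultaneously at $K_n$ and at the star with $v$ a leaf, it is strictly stronger than anything Cauchy--Schwarz gives in the regime where it is needed, it is not a standard lemma one can cite, and --- as your own reduction shows --- in that regime it carries essentially the full strength of the lemma being proved. In effect you have reduced the statement to an unproven claim of the same depth, so the proposal is circular in difficulty rather than a proof. (The proposed projection would also need care in degenerate cases, e.g.\ when $A'\mathbf 1_{N(v)}$ lies in the span of the other two vectors, as happens for the star itself.) The equality characterisation inherits the same defect, since it traces back through the unestablished step. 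Note that any correct proof must exploit global information beyond the local neighbourhood equations: locally, a pendant vertex attached to a high-weight vertex looks the same whether or not the host graph forces $x_u^2\le\frac12$, and it is exactly such a global fact that your sketch does not supply.
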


%
%
%


\section{Proof of Theorem \ref{k2,r+1}}
\hspace{1.5em}In this section, we consider $K_{2,r+1}$-free graphs among all non-star graphs. A graph is called $2$-connected, if it is a connected graph without cut vertex. 
Motivated by Lemma 2.2 of \cite{zhai2021}, we have the following Lemma.

\begin{lem}\label{lemcut}
	Let $m\geq 3$, $G^*$ be the extremal graph with the maximal spectral radius in $\mG(m,K_{2,r+1})\setminus \{S_{m+1}\}$, and $X^*=(x_{v_1},\dots,x_{v_{|V(G^*)|}})^T$ be the Perron vector of $G^*$ with $x_{u^*}=\max\limits_{u\in V(G^*)} x_u$. Then the following claims hold.\\
	{\rm (i)} $G^*$ is connected.\\
	{\rm (ii)} There exists no cut vertex in $V(G^*)\setminus \{u^*\}$, and hence $d(u)\geq 2$ for any $u\in V(G^*)\setminus N[u^*]$.
\end{lem}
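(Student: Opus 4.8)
The plan is to prove both claims by the same mechanism: assuming a claim fails, I locally modify $G^*$ into another graph $G'$ of the same size $m$ that still lies in $\mathcal{G}(m,K_{2,r+1})\setminus\{S_{m+1}\}$ but has strictly larger spectral radius, contradicting the maximality of $G^*$. The crucial bookkeeping observation throughout is that only the size $m$ is fixed, not the number of vertices, so whenever a modification isolates a vertex I may delete it without changing $m$ or leaving an isolated vertex.

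For (i), suppose $G^*$ is disconnected. Let $G_1$ be the component containing $u^*$; by Perron--Frobenius, $\rho(G_1)=\rho(G^*)$ and $X^*$ vanishes on every other component. Choose another component $G_2$ (which has an edge, since there are no isolated vertices) and an edge $ab\in E(G_2)$, and set $G'=G^*-ab+u^*a$, deleting $b$ afterwards should it become isolated. Since $x_a=x_b=0$, we have $(X^*)^{T}A(G')X^*=(X^*)^{T}A(G^*)X^*=\rho(G^*)\|X^*\|^2$, while the eigen-equation fails at $a$ because $(A(G')X^*)_a=x_{u^*}>0=\rho(G^*)x_a$; hence $X^*$ is not a Perron vector of $G'$ and $\rho(G')>\rho(G^*)$. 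The new edge $u^*a$ is pendant, so $G'$ stays $K_{2,r+1}$-free; and if the modification happened to produce the excluded star $S_{m+1}$, then $G^*$ was already beaten by $S_m^1\in\mathcal{G}(m,K_{2,r+1})\setminus\{S_{m+1}\}$, again a contradiction.

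For (ii), the ``hence'' is immediate: a vertex $u\notin N[u^*]$ with $d(u)=1$ has its unique neighbour $w\neq u^*$, and since $G^*$ is connected with at least three vertices ($m$ being large), $w$ separates $u$ from the rest, so $w\in V(G^*)\setminus\{u^*\}$ is a cut vertex. It thus suffices to forbid such cut vertices. Suppose $w\neq u^*$ is a cut vertex and let $C$ be a component of $G^*-w$ with $u^*\notin C$. I relocate the block $C$ from $w$ to $u^*$ by setting
\[
G'=G^*-\{wz:z\in N_{C}(w)\}+\{u^*z:z\in N_{C}(w)\}.
\]
This preserves the size, keeps $G'$ connected, and gives $N_{G^*}(u^*)\subsetneq N_{G'}(u^*)$, while
\[
(X^*)^{T}A(G')X^*-(X^*)^{T}A(G^*)X^*=2\,(x_{u^*}-x_w)\!\!\sum_{z\in N_C(w)}\!\! x_z\ \geq 0
\]
because $x_{u^*}$ is maximal. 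By Lemma \ref{moveedge} this yields $\rho(G')>\rho(G^*)$, the desired contradiction, once $G'$ is shown to stay in the family.

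The main obstacle is verifying that $G'$ is still $K_{2,r+1}$-free, since $u^*$ acquires many new neighbours. The key point is that in $G'$ the block $C$ is joined to the rest of the graph solely through $u^*$, exactly as it was joined solely through $w$ in $G^*$; hence any newly created copy of $K_{2,r+1}$ must use an edge $u^*z$ with $z\in C$, and tracing whether $u^*$ lies on the $2$-side or on the $(r+1)$-side of the bipartition shows, in each case, that replacing $u^*$ by $w$ exhibits a copy of $K_{2,r+1}$ already present in $G^*$, using that the relocated vertices $z$ were common neighbours of $w$. This contradiction rules out a new forbidden subgraph. Finally one checks $G'\neq S_{m+1}$, which holds because $G'$ retains the internal edges of $C$ (or the remaining edges at $w$) for $m\geq (4r+2)^2+1$, with any degenerate configuration again dominated by $S_m^1$. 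Together these modifications contradict the maximality of $\rho(G^*)$ and establish (i) and (ii).
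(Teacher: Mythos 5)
Your overall strategy coincides with the paper's: part (i) by reattaching an edge from a second component, and part (ii) by detaching the piece of $G^*$ hanging off a cut vertex $w\neq u^*$ and reattaching it at $u^*$, then invoking the $2$-connectivity of $K_{2,r+1}$ (a copy cannot cross the cut vertex $u^*$ of the modified graph) together with Lemma \ref{moveedge}. Up to that point your argument is sound, apart from two small slips: in (i) the new edge $u^*a$ need not be pendant (the vertex $a$ may keep other neighbours inside $G_2$), though it is a cut edge, which is all that is needed; and the claim that $X^*$ vanishes on every component other than the one containing $u^*$ requires that no other component attain $\rho(G^*)$, or an appropriate choice of $X^*$.

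The genuine gap is the last step of (ii), the verification that $G'\ncong S_{m+1}$. Your stated reasons --- that $G'$ ``retains the internal edges of $C$ or the remaining edges at $w$'' --- do not cover the critical case: if $C$ is a single vertex $z$ pendant at $w$ and every other edge of $G^*$ is incident with $u^*$, i.e.\ $G^*$ is a double star, then $G'$ really is $S_{m+1}$ and your modification leaves the family. You acknowledge this with ``any degenerate configuration is again dominated by $S_m^1$,'' but that is precisely the nontrivial claim, and you never prove it; it is where the paper does the only substantive computation of this lemma. One must show that no double star $D_{i,m-1-i}$ can be extremal, which the paper does by first reducing (via Lemma \ref{moveedge}) to $D_{m-2,1}$ and then comparing characteristic polynomials: $\rho(D_{m-2,1})$ is the largest root of $g(x)=x^4-mx^2+m-2$, $\rho(S_m^1)$ is the largest root of $f(x)=x^3-x^2-(m-1)x+m-3$, and the auxiliary polynomial $g(x)-xf(x)=x^3-x^2-(m-3)x+m-2$ is shown to be positive for $x>\sqrt{m-1}$, giving $\rho(D_{m-2,1})<\rho(S_m^1)\leq\rho(G^*)$. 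Both spectral radii are close to $\sqrt{m-1}$, so this comparison cannot be waved away; without it the proof of (ii) is incomplete.
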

\begin{proof}
	
	(i) Suppose that $G^*$ is disconnected. Let $G_1$ be a connected component of $G^*$ with $\rho(G^*)=\rho(G_1)$ and $G_2=G^*-G_1$. Selecting an edge $v_1v_2\in E(G_2)$ and a vertex $v$ with minimum degree in $G_1$. Let $G'$ be a graph obtained from $G^*-v_1v_2+v_1v$ by deleting all of its isolated vertices. Note that $K_{2,r+1}$ is $2$-connected and $v_1v$ is a cut edge in $G'$. Then $G'\in \mG(m,K_{2,r+1})$. Clearly, $G'\ncong S_{m+1}$ and $G_1$ is a proper subgraph of $G'$. Hence, $\rho(G')>\rho(G_1)=\rho(G^*)$, a contradiction.
	
	(ii) Suppose that there exists a cut vertex in $V(G^*)\setminus \{u^*\}$. Then $G^*$ has at least two end-blocks. We may assume that $B$ is an end-block of $G^*$ with $u^*\notin V(B)$ and a cut vertex $u\in V(B)$. Let $G''=G^*+\{u^*w \mid w\in N(u)\cap V(B) \}-\{uw \mid w\in N(u)\cap V(B) \}$. Then $G''\in \mG(m,K_{2,r+1})$ by $K_{2,r+1}$ is $2$-connected and $u^*$ is a cut vertex in $G''$. Since $N_{G^*}(u^*) \subsetneqq N_{G''}(u^*)$ and ${X^*}^T(A(G'')-A(G^*))X^*=\sum\limits_{u_iu_j\in E(G'')}2x_{u_i}x_{u_j}-\sum\limits_{u_iu_j\in E(G^*)}2x_{u_i}x_{u_j}=\sum\limits_{w\in N(u)\cap V(B)}2(x_{u^*}-x_{u})x_w\geq 0$, we have $\rho(G'')>\rho(G^*)$ by Lemma \ref{moveedge}. 
	
	Now we show that $G''\ncong S_{m+1}$. Denote by $D_{i,j}$ the double star which obtained from stars $S_{i+1}$ and $S_{j+1}$ by joining their centers, where $v_i$ is the center of $S_{i+1}$, $v_j$ is the center of $S_{j+1}$, and $K$ is the set of any $j-1$ terminal vertices of $S_{j+1}$.
	We only need to show $G^*\ncong D_{i,m-1-i}$ for $1\leq i \leq m-2$ by the construction of $G''$. 
	
	 Without loss of generality, we supposed that $x_i\geq x_j$. Clearly, $D_{m-2,1}=D_{i,j}+\{v_iw\mid w\in K\}-\{v_jw\mid w\in K\}$ where $i+j=m-1$. By Lemma \ref{moveedge}, we have $\rho(D_{i,m-1-i})\leq \rho(D_{m-2,1})$ for $1\leq i \leq m-2$. 
	
	Since $S_m^1\in \mG(m,K_{2,r+1})\setminus \{S_{m+1}\}$, we have $\rho(G^*)\geq \rho(S_m^1)$. By Lemma \ref{lemsnk}, $\rho(S_m^1)$ is the largest root of $f(x)=0$ where $f(x)=x^3-x^2-(m-1)x+m-3$. By a direct calculation, $\rho(D_{m-2,1})$ is the largest root of $g(x)=0$ where $g(x)=x^4-mx^2+m-2$. Let $h(x)=g(x)-xf(x)=x^3-x^2-(m-3)x+m-2$. then $h'(x)=3x^2-2x-(m-3)>0$ when $x>\sqrt{m-1}$, which means $h(x)$ is increasing in interval $(\sqrt{m-1},\infty)$. Moreover, $h(\sqrt{m-1})=2\sqrt{m-1}-1>0$. Thus, $h(x)>0$ when $x>\sqrt{m-1}$. Since $\rho(S_m^1)>\rho(S_m)=\sqrt{m-1}$, we have $\rho(D_{m-2,1})<\rho(S_m^1)\leq \rho(G^*)$. Hence, $\rho(D_{i,m-1-i})< \rho(G^*)$, then $G^*\ncong D_{i,m-1-i}$, and thus $G''\ncong S_{m+1}$.
	
	 Therefore, $G''\in \mG(m,K_{2,r+1})\setminus \{S_{m+1}\}$. It implies a contradiction by $\rho(G'')>\rho(G^*)$. 
\end{proof}

\noindent\textbf{Proof of Theorem \ref{k2,r+1}.}
Let $G^*$ be the extremal graph with the maximal spectral radius in $\mG(m,K_{2,r+1})\setminus \{S_{m+1}\}$, and $X^*=(x_{v_1},\dots,x_{v_{|V(G^*)|}})^T$ be the Perron vector of $G^*$ with $x_{u^*}=\max\limits_{u\in V(G^*)} x_u$. 

Let $A=A(G^*)$, $\rho^*=\rho(G^*)$, $U=N(u^*)$, $W=V(G^*)\setminus N[u^*]$ for short. Then 
\begin{align}
{\rho^*}^2	\xu &=\rho^*(AX^*)_{u^*}= \rho^*\sum_{v\in N(u^*)} x_v=\sum_{v\in N(u^*)} \sum_{w\in N(v)} x_w=\sum_{w\in V(G)} d_{N(u^*)}(w) x_w \notag \\
&=|U|\cdot \xu +\sum_{u\in U} d_U(u)x_u+\sum_{w\in W} d_U(w)x_w. \label{maineq}
\end{align}

It is clear that $S_m^1\in \mG(m,K_{2,r+1})\setminus \{S_{m+1}\}$ and $\rho(S_m^1)$ is the largest real root of the equation $x^3-x^2-(m-1)x+m-3=0$ by Lemma \ref{lemsnk}. Then $\rho(G^*)\geq \rho(S_m^1)>\rho(S_m)=\sqrt{m-1}$. Next we show $G^*\cong S_m^1$. Firstly, we give the following claims by using similar techniques in \cite{zhai2021}.

\noindent\textbf{Claim 1.} For any $u\in U\cup W$, $d_U(u)\leq r$.

Otherwise, $K_{2,r+1}$ is an induced subgraph of $G^*$, a contradiction.

\noindent\textbf{Claim 2.} Let $U_1=\{u\in U \mid \sum\limits_{w\in N_W(u)} d_W(w)>r^2\}$ and $U_2=U\setminus U_1$. If $U_1\neq \varnothing$, then we have $\frac{1}{2}\sum\limits_{u\in U_1} d_U(u)<e(W)$.

Let $N_W(U_1)=\bigcup\limits_{u\in U_1} N_W(u)$. Then $N_W(U_1)\subset W$. Since $d_{U_1}(w)\leq r$ for any $w\in N_W(U_1)$ by Claim 1, $d_W(w)$ is counted at most $r$ times in $\sum\limits_{u\in U_1} \sum\limits_{w\in N_W(u)} d_W(w)$. Thus we have
	\begin{equation}\label{c2a}
		e(W)=\frac{1}{2}\sum_{w\in W} d_W(w)
		\geq \frac{1}{2}\sum_{w\in N_W(U_1)} d_W(w)
		\geq \frac{1}{2r} \sum_{u\in U_1} \sum_{w\in N_W(u)} d_W(w).
	\end{equation}
By the definition of $U_1$ and Claim 1, we have
	\begin{equation}\label{c2b}
\frac{1}{2r} \sum_{u\in U_1} \sum_{w\in N_W(u)} d_W(w)
	> \frac{1}{2r} \sum_{u\in U_1} r^2
	\geq \frac{|U_1|\cdot r}{2}
	\geq \frac{1}{2}\sum\limits_{u\in U_1} d_U(u).
	\end{equation}
		
	Combining (\ref{c2a}) with (\ref{c2b}), Claim 2 holds.

\noindent\textbf{Claim 3.} Let $U_2'= \{u\in U_2 \mid  d_W(u)>\frac{r(\rho^*+2r)}{\rho^*-2r}\}$ and $U_2''=U_2\setminus U_2'$. For any $u\in U_2'$, we have $\frac{1}{2}d_U(u)x_u< \frac{1}{2}d_W(u)\xu-\sum\limits_{w\in N_W(u)} x_w $. 

	Let $u\in U_2'$. Since $U_2'\subset U_2= U\setminus U_1$, we have $\sum\limits _{w\in N_W(u)} \sum\limits _{v\in N_W(w) } x_v \leq \sum\limits_{w\in N_W(u)} d_W(w) \xu \leq r^2\xu$. By Claim 1, $|N_U(w)|\leq r$ for any $w\in N_W(u)$. Thus,
	\begin{equation}\label{c3a}
		\rho^* \sum_{w\in N_W(u)} x_w=\sum_{w\in N_W(u)} \rho^* x_w 
		=\sum_{w\in N_W(u)} \left(\sum_{v\in N_W(w) } x_v+\sum_{v\in N_U(w)}x_v \right)
		\leq (r^2+d_W(u)\cdot r)\xu.
	\end{equation}
	By Claim 1, (\ref{c3a}) and $d_W(u)>\frac{r(\rho^*+2r)}{\rho^*-2r}$, we have
	\begin{equation*}
		\frac{1}{2}d_U(u)x_u +\sum_{w\in N_W(u)} x_w
		\leq \frac{r}{2}\xu+\frac{r}{\rho^*}(r+d_W(u))\xu
		 <\frac{1}{2}d_W(u)\xu.
	\end{equation*}
	Hence, Claim 3 holds.

\noindent\textbf{Claim 4.} For any $u\in U_2''$, $x_u<\frac{1}{2}\xu$.

	Let $u\in U_2''$. By Claim 1 and (\ref{c3a}), we have
	\begin{equation*}
		\rho^*x_u=\xu +\sum_{v\in N_U(u)} x_v+\sum_{w\in N_W(u)}x_w 
		\leq \left(1+r+\frac{r}{\rho^*}(r+d_W(u))\right)\xu.
	\end{equation*}
	By the definition of $U_2''$, $d_W(u)\leq \frac{r(\rho^*+2r)}{\rho^*-2r}$. Thus 
	\begin{equation*}
		\rho^*x_u \leq \left(1+r+\frac{2r^2}{\rho^*-2r}\right)\xu.
	\end{equation*}
	In order to show that $x_u<\frac{1}{2}\xu$, it suffices to show that $1+r+\frac{2r^2}{\rho^*-2r}<\frac{\rho^*}{2}$, or equivalently, ${\rho^*}^2-(4r+2)\rho^*+4r>0$. Since $r\geq 2$ and $\rho^*>\sqrt{m-1}\geq 4r+2 >\frac{4r+2+\sqrt{(4r+2)^2-16r}}{2}$, we have ${\rho^*}^2-(4r+2)\rho^*+4r>0$. Claim 4 follows.

\noindent\textbf{Claim 5.} $W=\varnothing$.

	Note that $\rho^*>\sqrt{m-1}$. By (\ref{maineq}), we have 
	\begin{equation}\label{c5a}
		(m-1)\xu < |U|\cdot \xu +\sum_{u\in U} d_U(u)x_u+\sum_{w\in W} d_U(w)x_w,
	\end{equation}
	where
	\begin{equation}\label{c5b}
		\sum_{u\in U} d_U(u)x_u =\frac{1}{2}\sum_{u\in U_1\cup U_2'} d_U(u)x_u+\left( \frac{1}{2} \sum_{u\in U_1\cup U_2'} d_U(u)x_u+\sum_{u\in U_2''} d_U(u)x_u \right).
	\end{equation}
	Moreover, by Claim 4,
	\begin{equation}\label{c5c}
		\frac{1}{2} \sum_{u\in U_1\cup U_2'} d_U(u)x_u+\sum_{u\in U_2''} d_U(u)x_u\leq \frac{1}{2}\sum_{u\in U}d_U(u)x_{u^*}=e(U)x_{u^*}.
	\end{equation}
	By Claim 2 and Claim 3,
	\begin{equation}\label{c5d}
	\begin{split}
	\frac{1}{2}\sum_{u\in U_1\cup U_2'} d_U(u)x_u
	&\leq \frac{1}{2}\sum\limits_{u\in U_1} d_U(u)x_u+\sum_{u\in U_2'}\left(\frac{1}{2}d_W(u)x_{u^*}-\sum_{w\in N_W(u)} x_w\right)\\
	&\leq e(W)x_{u^*}+\sum_{u\in U}\left(\frac{1}{2}d_W(u)x_{u^*}-\sum_{w\in N_W(u)} x_w\right)\\
	&=e(W)x_{u^*}+\frac{1}{2}e(U,W)x_{u^*}-\sum_{w\in W}d_U(w)x_w.
	\end{split}
	\end{equation}
Combining (\ref{c5a}), (\ref{c5b}), (\ref{c5c}) with (\ref{c5d}), we have
	\begin{equation}
		(m-1)\xu < |U|\cdot \xu +e(U)\xu +e(W)x_{u^*}+\frac{1}{2}e(U,W)x_{u^*}=(m-\frac{1}{2}e(U,W))\xu.
	\end{equation}
	This implies that $e(U,W)<2$. If $e(U,W)=1$, say $E(U,W)=\{uv\}$, then vertex $u$ is a cut vertex. It is a contradiction to Lemma \ref{lemcut}. Therefore, $e(U,W)=0$, and thus $W=\varnothing$ by $G^*$ is connected. Claim 5 follows.

By Claim 5, we have $m=|U|+e(U)$ and ${\rho^*}^2\xu=|U|\cdot \xu +\sum\limits_{u\in U} d_U(u)x_u$ by (\ref{maineq}). 

For any $u\in U$, $\rho^*x_u=\xu+\sum\limits_{v\in N_U(u)}x_v \leq (r+1)\xu$ by Claim 1. Since $\rho^*>\sqrt{m-1}\geq 4r+2>3(r+1)$ and $r\geq 2$, we have $x_u\leq \frac{(r+1)\xu}{\rho^*}<\frac{1}{3}\xu$. Thus by (\ref{c5a}), we have
$$(m-1)\xu < |U|\cdot \xu +\frac{1}{3}\sum_{u\in U} d_U(u)\xu= |U|\cdot \xu +\frac{2}{3}e(U)\xu=(m-\frac{1}{3}e(U))\xu,$$
which implies $e(U)<3$. Note that $G^*$ is not a star. It follows that $e(U)=1$ or $2$.

If $e(U)=2$, then $G^*$ is isomorphic to either $S_{m-1}^2$ or $H$ (see Figure \ref{H}). 

If $G^*\cong S_{m-1}^2$, by Lemma \ref{lemsnk}, $\rho(S_{m-1}^2)$ is the largest root of $f_1(x)=0$ where $f_1(x)=x^3-x^2-(m-2)x+m-6$. Note that
$\rho(S_m^1)$ is the largest root of $f(x)=0$ where $f(x)=x^3-x^2-(m-1)x+m-3$. Since $r\geq 2$, we know $\rho(S_m^1)>\sqrt{m-1}\geq 4r+2\geq 10$. And   $f_1(x)-f(x)=x-3>0$ for $x\geq \rho(S_m^1)$. Thus $\rho(S_{m-1}^2)<\rho(S_m^1)$. It is a contradiction to the definition of $G^*$.

\begin{figure}[h]
	\centering
	\includegraphics[scale=0.13]{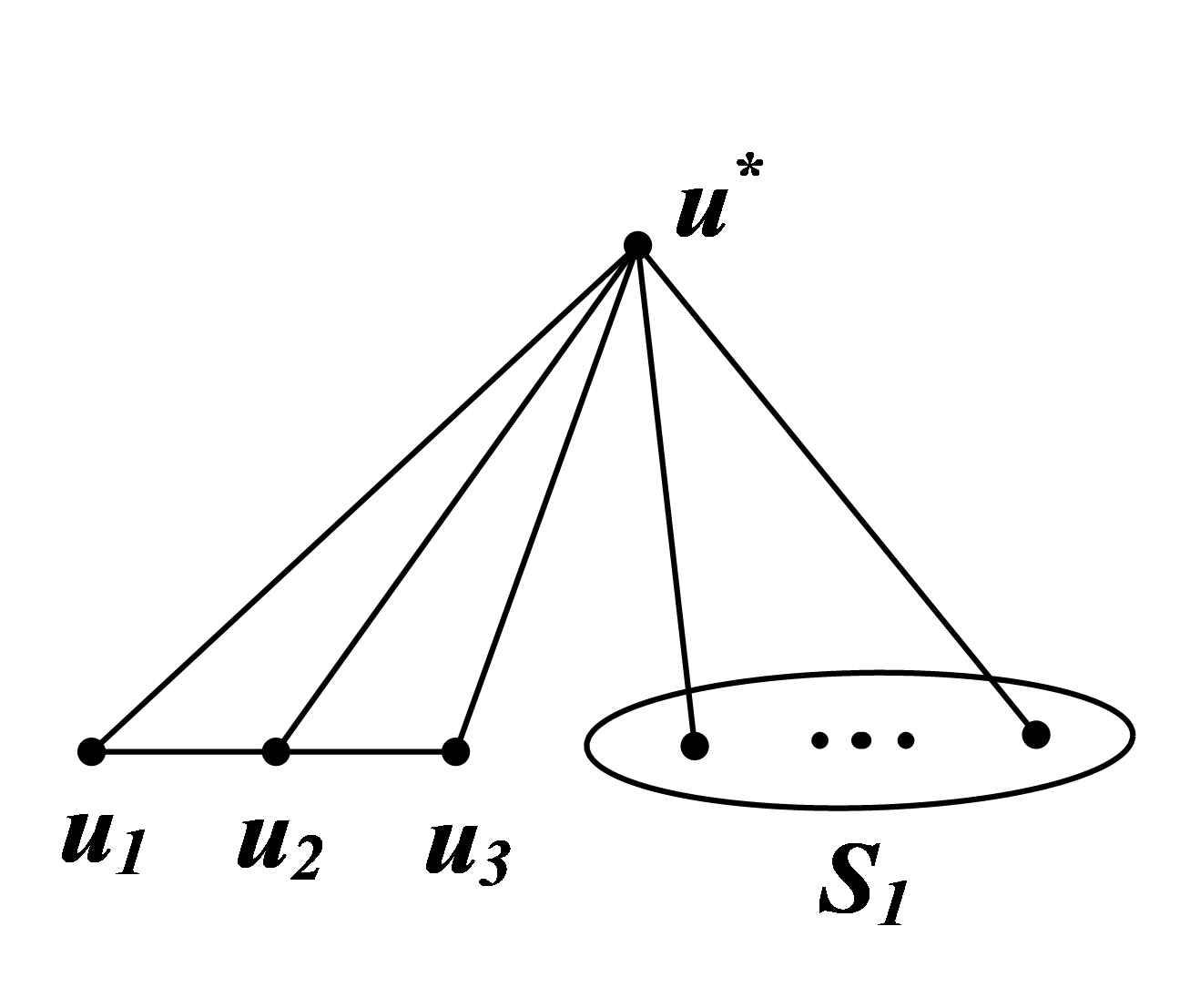}
	\caption{$H$}
	\label{H}
\end{figure}

If $G^*\cong H$, we partition the vertex set of $H$ as $\Pi$: $V(H)=\{u^*\}\cup \{u_1\}\cup \{u_2\}\cup \{u_3\}\cup S_1$, where $|S_1|=m-5$ (see Figure \ref{H}). The quotient matrix of $H$ with respect to $\Pi$ is $$B_{\Pi}=\begin{matrix}
\{u^*\} \\
\{u_1\} \\
\{u_2\}\\
\{u_3\}\\
S_1\end{matrix}\begin{pmatrix}
0 & 1 & 1 & 1 & m-5 \\
1 & 0 & 1 & 0 & 0 \\
1 & 1 & 0 & 1 & 0 \\
1 & 0 & 1 & 0 & 0 \\
1 & 0 & 0 & 0 & 0 \\
\end{pmatrix}.$$
Then $|xI_5-B_{\Pi}|=-x(x^4-mx^2-4x+2m-10)$. Let $f_2(x)=x^4-mx^2-4x+2m-10$, $h_1(x)=f_2(x)-xf(x)=x^3-x^2-(m+1)x+2m-10$. We will show that $h_1(x)>0$ for any $x\geq \rho(S_m^1)$. Since $r\geq 2$, we know $m\geq (4r+2)^2+1> 17$. Then $h_1'(x)=3x^2-2x-(m+1)$, and $h_1'(\sqrt{m-1})=2m-4-2\sqrt{m-1}>0$. This implies that $h_1(x)$ is monotonically increasing when $x>\sqrt{m-1}$. Moreover, $h_1(\sqrt{m-1})=m-9-2\sqrt{m-1}>0$. Hence $h_1(x)=f_2(x)-xf(x)>0$ for $x\geq \rho(S_m^1) >\sqrt{m-1}$. This indicates that the largest root of $f_2(x)$ is less than $\rho(S_m^1)$. By Lemma \ref{lemquo}, we have $\rho(H)<\rho(S_m^1)$. It is a contradiction to the definition of $G^*$.

Therefore, $e(U)=1$ and $|U|=m-1$, then $G^*\cong S_m^1$, and thus $\rho(G)\leq \rho(S_m^1)$ for any $G\in \mG(m,K_{2,r+1})\setminus \{S_{m+1}\}$, with equality if and only if $G\cong S_m^1$. $\hfill \qed$

\section{Proof of Theorem \ref{c5}}
\hspace{1.5em}In this section, we study the extremal graph attaining the maximum spectral radius over all graphs $G\in \mG(m,\theta_{1,2,3})\setminus \{S_{\frac{m+3}{2},2}\}$. 
Similar to Lemma \ref{lemcut}, we have the following lemma.

\begin{lem}\label{lemcut2}
	Let $G^*$ be the extremal graph with the maximal spectral radius in $\mG(m,\ga)\setminus \{ S_{\frac{m+3}{2},2} \}$, and $X^*=(x_{v_1},\dots,x_{v_{|V(G^*)|}})^T$ be the Perron vector of $G^*$ with $x_{u^*}=\max\limits_{u\in V(G^*)} x_u$. Then the following claims hold.\\
	{\rm (i)} $G^*$ is connected.\\
	{\rm (ii)} There exists no cut vertex in $V(G^*)\setminus \{u^*\}$, and hence $d(u)\geq 2$ for any $u\in V(G^*)\setminus N[u^*]$.
\end{lem}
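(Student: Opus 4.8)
The plan is to mirror the two-part edge-switching argument of Lemma \ref{lemcut}, exploiting two structural facts: that $\ga=\theta_{1,2,3}$ is $2$-connected, and --- crucially, and in contrast with the star $S_{m+1}$ appearing in Lemma \ref{lemcut} --- that the excluded extremal graph $S_{\frac{m+3}{2},2}$ is itself $2$-connected. The latter makes the step of excluding $S_{\frac{m+3}{2},2}$ essentially immediate, rather than requiring a separate spectral comparison as in Lemma \ref{lemcut}.

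For part (i) I would argue by contradiction: suppose $G^*$ is disconnected, let $G_1$ be a component with $\rho(G_1)=\rho(G^*)$, and pick an edge $v_1v_2$ in another component together with a minimum-degree vertex $v$ of $G_1$. Forming $G'$ from $G^*-v_1v_2+v_1v$ and deleting isolated vertices, the new edge $v_1v$ is a bridge, so any copy of the $2$-connected graph $\theta_{1,2,3}$ cannot use it and must lie entirely on one side, i.e.\ inside a subgraph of the $\theta_{1,2,3}$-free graph $G^*$; hence $G'$ is $\theta_{1,2,3}$-free. Since $G'$ contains the bridge $v_1v$ while $S_{\frac{m+3}{2},2}=K_2\triangledown\overline{K}_{(m-1)/2}$ is a book graph in which every edge lies on a triangle and so has no bridge, we get $G'\in\mG(m,\ga)\setminus\{S_{\frac{m+3}{2},2}\}$; and the component of $G'$ containing $G_1$ properly contains $G_1$, so $\rho(G')>\rho(G_1)=\rho(G^*)$, contradicting maximality.

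For part (ii) I would again argue by contradiction and transplant an end-block. If some $u\in V(G^*)\setminus\{u^*\}$ is a cut vertex, then $G^*$ has at least two end-blocks, and since a cut vertex distinct from $u^*$ exists one may choose an end-block $B$ with $u^*\notin V(B)$ and cut vertex $u$. Set $G''=G^*+\{u^*w:w\in N(u)\cap V(B)\}-\{uw:w\in N(u)\cap V(B)\}$. The multiset of blocks of $G''$ coincides with that of $G^*$ (the block $B$ is merely reattached at $u^*$), so $2$-connectivity of $\theta_{1,2,3}$ again forces $G''$ to be $\theta_{1,2,3}$-free. Because $N_{G^*}(u^*)\subsetneqq N_{G''}(u^*)$ and
$$
{X^*}^T\big(A(G'')-A(G^*)\big)X^*=\sum_{w\in N(u)\cap V(B)}2\,(x_{u^*}-x_u)\,x_w\ge 0
$$
by the maximality of $x_{u^*}$, Lemma \ref{moveedge} yields $\rho(G'')>\rho(G^*)$. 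Finally $u^*$ is a cut vertex of $G''$, since it separates the nonempty set $V(B)\setminus\{u\}$ from the nonempty remainder containing $u$; whereas $S_{\frac{m+3}{2},2}$ has no cut vertex. Hence $G''\in\mG(m,\ga)\setminus\{S_{\frac{m+3}{2},2}\}$, contradicting maximality.

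The stated degree consequence then follows quickly: if $u\in V(G^*)\setminus N[u^*]$ had $d(u)=1$, its unique neighbor $w$ would satisfy $w\neq u^*$ (as $u\notin N(u^*)$) and would be a cut vertex separating $u$ from the connected remainder, contradicting (ii); thus $d(u)\ge 2$. I expect the only delicate points to be bookkeeping rather than conceptual: verifying that the edge switch in (i) and the block reattachment in (ii) create no new copy of $\theta_{1,2,3}$ across the newly created bridge or cut vertex (handled uniformly by $2$-connectivity of $\theta_{1,2,3}$), and confirming that $S_{\frac{m+3}{2},2}$ is $2$-connected so that excluding the forbidden extremal graph is automatic --- which is precisely the simplification over Lemma \ref{lemcut}, where the star $S_{m+1}$ forced the more elaborate double-star comparison.
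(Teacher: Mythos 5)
Your proposal is correct and follows essentially the same route as the paper: part (i) repeats the edge-switching argument of Lemma \ref{lemcut}(i), and part (ii) reattaches an end-block at $u^*$ and applies Lemma \ref{moveedge}, with the exclusion of $S_{\frac{m+3}{2},2}$ handled exactly as in the paper by noting that it is $2$-connected while $G'$ (resp.\ $G''$) has a bridge (resp.\ a cut vertex). Your explicit derivation of $d(u)\ge 2$ is a harmless elaboration of what the paper leaves implicit.
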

\begin{proof}
	The proof of claim (i) is similar to Lemma \ref{lemcut}(i), so we only prove claim (ii) in the following.
	
	Suppose that there exists a cut vertex in $V(G^*)\setminus \{u^*\}$. Then $G^*$ has at least two end-blocks. We may assume that $B$ is an end-block of $G^*$ with $u^*\notin V(B)$ and a cut vertex $u\in V(B)$. Let $G''=G^*+\{u^*w \mid w\in N(u)\cap V(B) \}-\{uw \mid w\in N(u)\cap V(B) \}$. Then $u^*$ is a cut vertex of $G''$. 
	 Since $\ga$ and $S_{\frac{m+3}{2},2}$ is $2$-connected, we have $G''$ is $\ga$-free and $G''\ncong S_{\frac{m+3}{2},2}$. By Lemma \ref{moveedge}, $\rho(G'')>\rho(G^*)$. It is a contradiction. 
\end{proof}


\begin{lem}\label{rho}
Let $F_{m,t}$ be the graph defined in Section \ref{sec1} with $m>t+1$ {\rm (}see Figure \ref{Fmt}{\rm )}. Then we have\\
	{\rm (i)} 
 $\rho(F_{m,t})< \left\{\begin{matrix}
	\rho(F_{m,1}),	&\text{if } t(\geq 3) \text{is odd,} \\ 
	\rho(F_{m,2}),	&\text{if } t(\geq 4) \text{is even.}
	\end{matrix}\right.$\\
	{\rm (ii)} $\rho(F_{m,t})>\frac{1+\sqrt{4m-7}}{2}$ for $t=1,2$ when $m\geq 22$. 
\end{lem}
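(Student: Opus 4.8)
The plan is to reduce both parts to the single quartic
\[
P_t(x) := x^4 - mx^2 - (m-t-1)x + \tfrac{t}{2}(m-t-1),
\]
whose largest root is exactly $\rho(F_{m,t})$. Indeed, $F_{m,t}$ admits an equitable partition into $\{a\}$, $\{b\}$ (the two vertices of the central $K_2$, with $a$ the maximum-degree vertex carrying the $t$ pendants), the set $M$ of the $\frac{m-t-1}{2}$ common neighbours of $a$ and $b$, and the set $P$ of the $t$ pendant vertices. The associated quotient matrix is
\begin{equation*}
B_\Pi=\begin{pmatrix} 0 & 1 & \frac{m-t-1}{2} & t \\ 1 & 0 & \frac{m-t-1}{2} & 0 \\ 1 & 1 & 0 & 0 \\ 1 & 0 & 0 & 0 \end{pmatrix},
\end{equation*}
and expanding $\det(xI-B_\Pi)$ gives precisely $P_t(x)$, so $\rho(F_{m,t})$ is its largest root by Lemma \ref{lemquo}. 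Throughout I will use the elementary sign principle: if a polynomial $P$ with positive leading coefficient has largest real root $\rho$ and $P(c)<0$ for some real $c$, then $c<\rho$, since $P(x)>0$ for every $x>\rho$. This converts each root comparison into a single sign evaluation.

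For part (i) the key observation is the clean factorization
\begin{equation*}
P_t(x)-P_{t'}(x)=(t-t')\left(x+\tfrac{m-t-t'-1}{2}\right).
\end{equation*}
A well-defined $F_{m,t}$ has at least one middle vertex, forcing $t\le m-3$; hence for $t'\in\{1,2\}$ with $t'<t$ one has $m-t-t'-1\ge 0$, so the bracket is strictly positive for all $x>0$. Evaluating at $x=\rho(F_{m,t})=:\rho_t>0$ and using $P_t(\rho_t)=0$ gives $P_{t'}(\rho_t)=-(t-t')\bigl(\rho_t+\tfrac{m-t-t'-1}{2}\bigr)<0$, whence $\rho_t<\rho_{t'}$ by the sign principle. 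Taking $t'=1$ for odd $t\ge 3$ and $t'=2$ for even $t\ge 4$ yields (i).

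For part (ii), set $\beta=\frac{1+\sqrt{4m-7}}{2}$, the largest root of $x^2-x-(m-2)=0$, so $\beta^2=\beta+m-2$. Substituting this relation repeatedly (via $\beta^4=(2m-3)\beta+(m-1)(m-2)$ and $m\beta^2=m\beta+m(m-2)$) collapses $P_t(\beta)$ to the simple form $P_t(\beta)=(t-2)\beta-(m-2)+\tfrac{t}{2}(m-t-1)$. Specializing, $P_1(\beta)=-\beta-\tfrac{m-2}{2}<0$ and $P_2(\beta)=-1<0$. Applying the sign principle to $P_t$ itself gives $\beta<\rho(F_{m,t})$ for $t=1,2$, which is the claim.

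The computations are otherwise routine; the only places demanding care are the correct derivation of the quotient matrix (so that $P_t$ carries the stated coefficients) and, in part (i), verifying that $m-t-t'-1\ge 0$ over the whole admissible range of $t$, so that the factor $x+\tfrac{m-t-t'-1}{2}$ never changes sign for $x>0$. I expect this bookkeeping, together with the observation that a valid $F_{m,t}$ forces $t\le m-3$, to be the main (minor) obstacle; everything else follows mechanically. I note in passing that the sign computations in (ii) hold whenever $F_{m,t}$ is well-defined, so the hypothesis $m\ge 22$ is invoked there only for consistency with the ambient theorem rather than out of necessity.
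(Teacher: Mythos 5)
Your proposal is correct, and for part (ii) it takes a genuinely better route than the paper. For part (i) both arguments are the same in substance: compute the quotient matrix of the equitable partition, obtain the quartic $x^4-mx^2-(m-t-1)x+\frac{t}{2}(m-t-1)$, and compare characteristic polynomials by showing the difference is positive on $(0,\infty)$. Your factorization $P_t(x)-P_{t'}(x)=(t-t')\bigl(x+\frac{m-t-t'-1}{2}\bigr)$ is in fact the correct one; the paper's stated difference $h_2(x)=(t-1)x+\frac{t}{4}(m-t-1)(m-2)$ has an erroneous constant term (the correct constant is $\frac{(t-1)(m-t-2)}{2}$), though since both are positive the paper's conclusion survives. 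Your explicit check that $t\le m-3$ (so the bracket stays positive) is a detail the paper glosses over. For part (ii) the paper simply asserts, ``with the help of Matlab,'' that $f_3\bigl(\frac{1+\sqrt{4m-7}}{2}\bigr)<0$ for $m\ge 22$ and $t=1,2$; your reduction via $\beta^2=\beta+m-2$ to $P_t(\beta)=(t-2)\beta-(m-2)+\frac{t}{2}(m-t-1)$, giving $P_1(\beta)=-\beta-\frac{m-2}{2}<0$ and $P_2(\beta)=-1<0$, is a clean closed-form verification that replaces the computer check entirely, and it justifies your (correct) observation that the hypothesis $m\ge 22$ is not actually needed for this part.
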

\begin{figure}[h]
	\centering
	\begin{minipage}{200pt}
		\centering
		\includegraphics[scale=0.28]{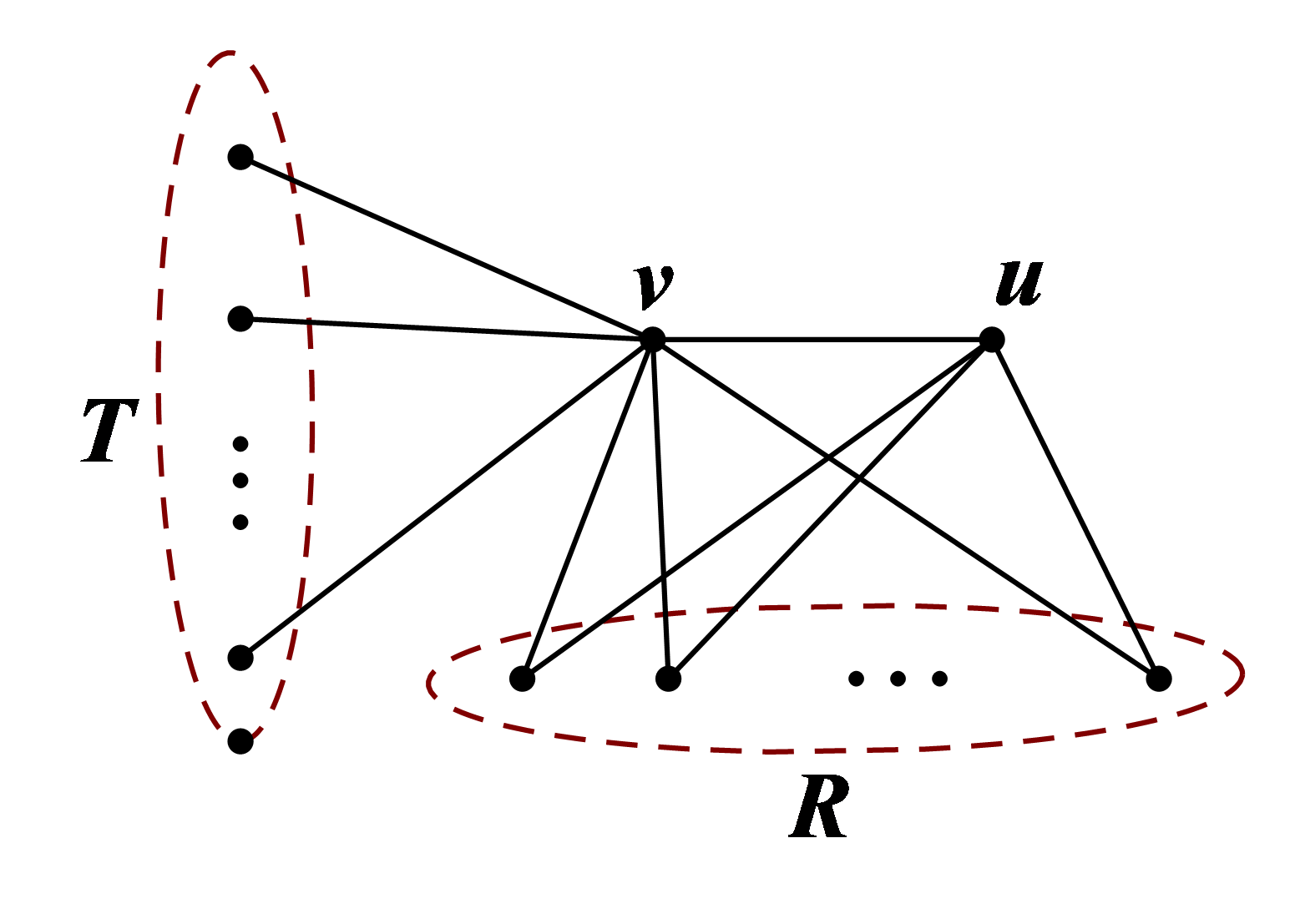}
		\caption{$F_{m,t}$}
		\label{Fmt}
	\end{minipage}
\end{figure}
\begin{proof}
	(i) We partition the vertex set of $F_{m,t}$ as $\Pi_1$: $V(F_{m,t})=T\cup \{v\}\cup \{u\}\cup R$, where $|T|=t$, $|R|=\frac{m-t-1}{2}$ (see Figure \ref{Fmt}). The quotient matrix of $F_{m,t}$ with respect to $\Pi_1$ is $$B_{\Pi_1}=\begin{matrix}
	T\\
	\{v\} \\
	\{u\}\\
	R\end{matrix}\begin{pmatrix}
	0 & 1 & 0 & 0 \\
	t & 0 & 1 & r \\
	0 & 1 & 0 & r \\
	0 & 1 & 1 & 0 
	\end{pmatrix}.$$
	Let $f_3(x)=|xI_4-B_{\Pi_1}|=x^4-mx^2-(m-t-1)x+\frac{t}{2}\cdot (m-t-1)$, $f_4(x)=x^4-mx^2-(m-2)x+\frac{m-2}{2}$. Then $h_2(x)=f_3(x)-f_4(x)=(t-1)x+\frac{t}{4}\cdot (m-t-1)(m-2)>0$ for $x>0$. This indicates that the largest root of $f_3(x)$ is less than the largest root of $f_4(x)$. By Lemma \ref{lemquo}, we have $\rho(F_{m,t})<\rho(F_{m,1})$ for $t\geq 3$. Similarly, we can prove that $\rho(F_{m,t})<\rho(F_{m,2})$ for $t\geq 4$, and thus (i) holds.

	(ii) With the help of Matlab, we obtains that when $m\geq 22$ and $t=1,2$, $f_3(\frac{1+\sqrt{4m-7}}{2})<0$. Thus $\rho(F_{m,t})>\frac{1+\sqrt{4m-7}}{2}$ for $t=1,2$.
\end{proof}

\begin{lem}\label{c5lem1}
	let $G^*$, $u^*$ be the symbols defined in Lemma \ref{lemcut2}. Then the connected component of $G^*[N(u^*)]$ can only be an isolated vertex, a triangle or a star $K_{1,r} (r\geq 1)$.
\end{lem}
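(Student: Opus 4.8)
The plan is to reduce the structure of $G^*[N(u^*)]$ to a single forbidden-path condition and then classify the possible connected components combinatorially. Concretely, I would first prove that $G^*[N(u^*)]$ contains no path on four vertices (as a subgraph), and then invoke the elementary fact that a connected graph with no such path must be an isolated vertex, a triangle, or a star.

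For the first step, suppose toward a contradiction that $G^*[N(u^*)]$ contains a path $a-b-c-d$ on four distinct vertices, so that $ab,bc,cd\in E(G^*)$ and $a,b,c,d\in N(u^*)$. Taking $u^*$ and $b$ as the two endpoints, I would exhibit three internally disjoint paths: the edge $u^*b$ (length $1$), the path $u^*-a-b$ (length $2$), and the path $u^*-d-c-b$ (length $3$). All the required edges lie in $G^*$ since $a,b,c,d$ are neighbors of $u^*$, the five vertices $u^*,a,b,c,d$ are distinct, and the three internal vertex sets $\varnothing$, $\{a\}$, $\{d,c\}$ are pairwise disjoint. Hence these paths form a copy of $\ga$ in $G^*$, contradicting that $G^*$ is $\ga$-free. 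Therefore $G^*[N(u^*)]$, and in particular each of its connected components, contains no path on four vertices.

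For the second step, let $H$ be a connected component of $G^*[N(u^*)]$; by the above, $H$ has no $P_4$. If $H$ is triangle-free, then $H$ is acyclic, since any cycle of length at least four contains a $P_4$, and so $H$ is a tree; a tree with no $P_4$ has diameter at most two and is therefore a star $K_{1,r}$ with $r\geq 0$, which is an isolated vertex when $r=0$ and a star with $r\geq 1$ otherwise. If instead $H$ contains a triangle on $\{a,b,c\}$ and has a further vertex, then by connectivity some vertex $w\notin\{a,b,c\}$ is adjacent to one of them, say $a$, producing the path $w-a-b-c$ on four vertices, a contradiction; hence $H$ is exactly the triangle $K_3$. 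Combining the two cases shows that every component of $G^*[N(u^*)]$ is an isolated vertex, a triangle, or a star $K_{1,r}$ with $r\geq 1$, as claimed.

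The argument is essentially self-contained, and the only point requiring real care is the explicit theta construction in the first step, where one must verify that the three paths are genuinely internally disjoint and that all five vertices are distinct, so that a bona fide $\ga$ subgraph is actually produced; the combinatorial classification in the second step is then routine once the no-$P_4$ property is in hand.
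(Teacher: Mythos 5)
Your proof is correct and takes essentially the same route as the paper: the paper's entire argument is the one-line observation that a path of length $3$ in $G^*[N(u^*)]$ together with $u^*$ yields a copy of $\ga$, leaving the classification of connected $P_4$-free graphs implicit. You have simply made both halves explicit — the theta construction with endpoints $u^*$ and $b$ and the routine case analysis showing a connected graph with no $P_4$ is an isolated vertex, a triangle, or a star — and both are verified correctly.
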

\begin{proof}
	If $G^*[N(u^*)]$ contains a path of length $3$, then vertices of the path and $u^*$ induce $\ga$, a contradiction.
\end{proof}

In the rest of this paper, let $G^*$, $X^*$, $u^*$ be the symbols defined in Lemma \ref{lemcut2}, $\rho^*=\rho(G^*)$ and $W=V(G^*)\setminus N[u^*]$. We define $N_i(u^*)=\{ u\in N(u^*)\mid u  \mbox{ lies in the component } K_{i+1} \mbox{ of } G^*[N(u^*)] \}$ and $W_i=N_W(N_i(u^*))=\bigcup\limits _{u\in N_i(u^*)} N_W(u)$ for $i\in \{0,1,2\}$. Since $G^*$ is $\ga$-free, $W_0\cap W_i=\varnothing$ for $i\in\{1,2\}$ and $|N(w)\cap N(u^*)|=1$ for any $w\in  W\setminus (W_0\cup W_1)$. 

Let $c$ be the number of star-components of $G^*[N(u^*)]$, $k$ be the number of triangle-components of $G^*[N(u^*)]$. We have the following results on the local structure of $G^*$.


\begin{lem}\label{c5ew}
	Let $m\geq 22$. Then $e(W)=0$ and $W=W_0$.
\end{lem}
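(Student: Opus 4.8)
The plan is to push the local eigenvalue--eigenvector bookkeeping used in Claims~1--5 of the proof of Theorem~\ref{k2,r+1} one step further, now exploiting that $\rho^*$ is squeezed into an interval of length essentially $1$. Normalise the Perron vector so that $x_{u^*}=1$ and set $U=N(u^*)$. Since $F_{m,1},F_{m,2}\in\mG(m,\ga)\setminus\{S_{\frac{m+3}{2},2}\}$ are $\ga$-free, the extremality of $G^*$ together with Lemma~\ref{rho}(ii) gives $\rho^*\ge\rho(F_{m,t})>\frac{1+\sqrt{4m-7}}{2}$ for the relevant parity, while Theorem~\ref{ga} gives $\rho^*\le\frac{1+\sqrt{4m-3}}{2}$. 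As $\frac{1+\sqrt{4m-7}}{2}$ and $\frac{1+\sqrt{4m-3}}{2}$ are the positive roots of $x^2-x-(m-2)$ and $x^2-x-(m-1)$, this is exactly $m-2<{\rho^*}^2-\rho^*\le m-1$. Exactly as in \eqref{maineq} I record the length-two walk identity at $u^*$,
$${\rho^*}^2=|U|+\sum_{u\in U}d_U(u)x_u+\sum_{w\in W}d_U(w)x_w,$$
together with the edge budget $m=|U|+e(U)+e(U,W)+e(W)$. The key structural observation is that an edge with both endpoints in $W$ is never the second edge of a walk $u^*\to v\to z$ (which forces $v\in U$), so it contributes nothing to ${\rho^*}^2$ while still consuming one unit of the budget; similarly a $U$--$W$ edge only contributes $x_w$, which will be shown to be small.

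Rewriting the identity in deficiency form, with $y_v=1-x_v\ge0$ and $y_{u^*}=0$, gives
$${\rho^*}^2-\rho^*=2e(U)+\sum_{u\in U}\bigl(1-d_U(u)\bigr)y_u+\sum_{w\in W}d_U(w)x_w,$$
and substituting $2e(U)=2m-2|U|-2e(U,W)-2e(W)$ turns the target into a comparison of $\tfrac32 e(U,W)+2e(W)$ with $m-2|U|+|\mathrm{iso}_U|+2$ and the centre deficiencies. To control this I would use Lemma~\ref{c5lem1} to split $U$ into isolated vertices, star-components and triangle-components: leaves of stars and isolated vertices of $U$ have $d_U(u)\le1$, so they contribute non-negatively but at most $y_u\le1$ each, whereas a star-centre $p$ has $d_U(p)$ of order $m$ and contributes the genuinely delicate term $-(d_U(p)-1)y_p$. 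In the book $S_{\frac{m+3}{2},2}$ the two spine vertices are symmetric, so there $x_p=x_{u^*}$, i.e.\ $y_p=0$; for $G^*$ near-extremal I would show $y_p$ is very small, so that the product $(d_U(p)-1)y_p$ must be estimated rather than discarded. For the $W$-weights I would prove $x_w\le\tfrac12$ (and $O(1/\rho^*)$ when the $U$-neighbours of $w$ are light) from $\rho^*x_w=\sum_{v\in N_U(w)}x_v+\sum_{w'\in N_W(w)}x_{w'}$ together with the structural facts $W_0\cap W_i=\varnothing$ and $|N(w)\cap U|=1$ off $W_0\cup W_1$.

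The conclusion then follows by feeding these bounds back into the comparison. Each edge of $e(W)$ contributes $0$ to ${\rho^*}^2$ although it uses a unit of the budget, and each $U$--$W$ edge contributes only $x_w\le\tfrac12$ in place of the larger contribution a $U$-internal edge would make; thus every edge of $e(W)$ and every vertex of $W_1\cup W_2$ produces a definite downward push on ${\rho^*}^2-\rho^*$. I would show this push is at least $1$ per bad feature, so that the existence of any such feature forces ${\rho^*}^2-\rho^*\le m-2$, contradicting the lower end $\frac{1+\sqrt{4m-7}}{2}<\rho^*$ of the window; hence $e(W)=0$ and every $W$-vertex meets only isolated components of $G^*[U]$, that is $W=W_0$. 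I expect the genuine difficulty to be exactly this quantitative step: because the admissible interval for ${\rho^*}^2-\rho^*$ has length $1$, the bookkeeping must certify a penalty of size at least $1$ for a single forbidden edge or attachment, and the only way to do so is to pin down the near-symmetry of the heavy star-centre(s) sharply enough that the large term $(d_U(p)-1)y_p$ does not swamp the small penalties. Making the constants explicit, and checking that the required slack only becomes positive once $m\ge22$, is where the bulk of the work --- and all of the numerical hypotheses --- will go.
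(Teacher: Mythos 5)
Your setup is the same as the paper's: the length-two walk identity at $u^*$, the edge budget $m=|U|+e(U)+e(U,W)+e(W)$, and the lower bound ${\rho^*}^2-\rho^*>m-2$ coming from $\rho^*\geq\rho(F_{m,t})>\frac{1+\sqrt{4m-7}}{2}$ (the upper bound from Theorem~\ref{ga} is never needed). Your deficiency form is an exact rewriting of the paper's inequality \eqref{c5eq2}, and pushing it through with $y_u\le 1$ and $e(N_+(u^*))\le|N_+(u^*)|$ gives precisely the paper's conclusion $e(W)<2$, i.e.\ $e(W)\le 1$. Up to that point you are on track. One remark: your worry about the star-centre term $-(d_U(p)-1)y_p$ is misplaced --- it enters with the \emph{favourable} sign (it only decreases ${\rho^*}^2-\rho^*$), so the paper simply discards it via $x_p\le x_{u^*}$; no sharp control of $y_p$ is needed.

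The genuine gap is your closing claim that ``every bad feature produces a penalty of at least $1$,'' which is false for exactly the configurations that remain after $e(W)\le 1$, and which the paper must exclude by three mechanisms absent from your proposal. First, a vertex $w\in W$ with a single neighbour in $U$ and no neighbour in $W$ contributes $x_w\le x_{u^*}/\rho^*$ against a budget cost of one edge, a penalty of only $1-1/\rho^*<1$; with total slack $2$ in the inequality such vertices cannot be killed quantitatively, and the paper instead rules them out because they would be cut vertices, contradicting Lemma~\ref{lemcut2}(ii). Second, in the case $e(W)=1$ with the unique $W$-edge inside $W_0$ and no triangle-components, the bookkeeping yields no contradiction at all; the paper shows $G^*$ would then be triangle-free and invokes the Nosal-type bound $\rho^*\le\sqrt{m}<\frac{1+\sqrt{4m-7}}{2}$ from Theorem~\ref{Kr+1} --- a global spectral input your local accounting cannot replace. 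Third, to show $W=W_0$ when $e(W)=0$, the paper does not argue via the inequality but rewires all edges from $W_1$ to $u^*$ and applies Lemma~\ref{moveedge} to contradict extremality. The remaining subcases of $e(W)=1$ also require the refined eigenvector values $x_{u}=\frac{x_{u^*}}{\rho^*-2}$ on triangle-components obtained by symmetry, not just $x_u\le x_{u^*}$. Without these structural and spectral ingredients your quantitative scheme cannot certify the required penalty, so the proof as proposed does not close.
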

\begin{proof}
	Let $N_+(u^*)=N(u^*)\setminus N_0(u^*)$. Since $\rho^* x_{u^*}=\sum\limits_{u\in N_0(u^*)}x_u+\sum\limits_{u\in N_+(u^*)} x_u$
	and
	${\rho^*}^2	\xu =d(u^*)x_{u^*}+\sum\limits_{u\in N_+(u^*)} d_{N(u^*)}(u)x_u+\sum\limits_{w\in W} d_{N(u^*)}(w)x_w$ by (\ref{maineq}), we have
	\begin{align}
	({\rho^*}^2-\rho^*)\xu & =|N(u^*)|\xu +\sum_{u\in N_+(u^*) }(d_{N(u^*)}(u)-1)x_u-\sum_{u\in N_0(u^*)} x_u+\sum_{w\in W} d_{N(u^*)}(w)x_w \label{c5eq}\\
	&\leq \left(|N(u^*)|+2e(N_+(u^*))-|N_+(u^*)|-\sum_{u\in N_0(u^*)} \frac{x_u}{\xu}+e(W,N(u^*)) \right)\xu.\label{c5eq2}
	\end{align}

	Notice that $F_{m,1}$ is $\ga$-free and $S_{\frac{m}{2}+1,2}$ is a proper subgraph of $F_{m,1}$. By Lemma \ref{snk}, we have $\rho^*\geq \rho(F_{m,1})>\rho(S_{\frac{m}{2}+1,2})=\frac{1+\sqrt{4m-7}}{2}$, that is, ${\rho^*}^2-\rho^*> m-2$. By $m=|N(u^*)|+e(N_+(u^*))+e(W,N(u^*))+e(W)$ and (\ref{c5eq2}), we have 
	\begin{equation}\label{c5eq3}
		e(W)< e(N_+(u^*))-|N_+(u^*)|-\sum_{u\in N_0(u^*)} \frac{x_u}{\xu}+2,
	\end{equation}
	then $e(W)\leq 1$ by $e(N_+(u^*))\leq |N_+(u^*)|$, which follows from Lemma \ref{c5lem1}. 
	
	\noindent\textbf{Case 1.} $e(W)=1$.
	
	Let $w_1w_2$ be the unique edge in $G^*[W]$. It follows from (\ref{c5eq3}) that $c=|N_+(u^*)|-e(N_+(u^*))=0$ and any non-trivial connected component of $G^*[N(u^*)]$ is a triangle. Then $W=W_0\cup W_2$, and thus there are three cases: (1) $w_1,w_2\in W_0$; (2) $w_1\in W_0$, $w_2\in W_2$; (3) $w_1,w_2\in W_2$. In the following, we study the above three cases.
	
	\noindent\textbf{Subcase 1.1.} $w_1,w_2\in W_0$.
	
	Then $e(W_2)=0$. For any $w\in W_2$, we have $|N(w)\cap N(u^*)|=1$, and then $w$ is a cut vertex in $G^*$, in contradiction with (ii) of Lemma \ref{lemcut2}. Thus $W_2=\varnothing$. Assume that the number of triangle-components $k\geq 1$, and let $u_1$ be a vertex of a triangle-component of $G^*[N(u^*)]$. By symmetry, $\rho^*x_{u_1}=2x_{u_1}+\xu$. Since $m\geq 22$, we have $\rho^*>\frac{1+\sqrt{4m-7}}{2}\geq 5$, and then $x_{u_1}=\frac{\xu}{\rho^*-2}<\frac{\xu}{3}$. Then by (\ref{c5eq}), 
	$$({\rho^*}^2-\rho^*)\xu\leq \left(|N(u^*)|+\frac{3k}{\rho^*-2}-\sum_{u\in N_0(u^*)} \frac{x_u}{\xu}+e(W,N(u^*)) \right)\xu.$$
	Since ${\rho^*}^2-\rho^*> m-2$ and $m=e(W)+|N(u^*)|+e(W,N(u^*))+3k$, we have $e(W)<2-2k$. Thus $k=0$ and then $N(u^*)=N_0(u^*)$. By Lemma \ref{lemcut2}, there exist $u_2\neq u_3$ such that $u_2\in N_{N(u^*)}(w_1)$ and $u_3\in N_{N(u^*)}(w_2)$. Since $G^*$ is $\ga$-free, we have $N_{N(u^*)}(w_1)\cap N_{N(u^*)}(w_2)=\varnothing$. Thus $G^*$ is triangle-free. By Theorem \ref{Kr+1}, $\rho^*\leq \sqrt{m}<\frac{1+\sqrt{4m-7}}{2}$, in contradiction with $\rho^*>\frac{1+\sqrt{4m-7}}{2}$.
	
	\noindent\textbf{Subcase 1.2.} $w_1\in W_0$, $w_2\in W_2$.
	
	Then $k\geq 1$ and $N_0(u^*)\neq \varnothing$. Since $G^*$ is $\ga$-free, we have $|N(w_2)\cap N(u^*)|=1$ and $N(w_1)\cap N_2(u^*)=\varnothing$. Then $w_2$ is adjacent to a vertex $u_1$ of a triangle-component $C$ of $G^*[N(u^*)]$. Let $u_2\in V(C)\setminus \{u_1\}$. By symmetry, $\rho^*x_{u_2}=x_{u_2}+x_{u_1}+\xu\leq x_{u_2}+2\xu$, then $x_{u_2}\leq \frac{2\xu}{\rho^*-1}$. By similar proof of Subcase 1.1, we can show $W_2=\{w_2\}$. Then by (\ref{c5eq}) and ${\rho^*}^2-\rho^*> m-2$, we have
	\begin{align*}
		(m-2)\xu &< |N(u^*)|\xu +\frac{3(k-1)\xu}{\rho^*-2} +2x_{u_2}+x_{u_1}-\sum_{u\in N_0(u^*)} x_u+\sum_{w\in W} d_{N(u^*)}(w)x_w\\ 
		&\leq \left(|N(u^*)|+\frac{3(k-1)}{\rho^*-2}+\frac{4}{\rho^*-1}+1-\sum_{u\in N_0(u^*)} \frac{x_u}{\xu}+e(W,N(u^*)) \right)\xu.
	\end{align*}
	
	Hence, $e(W)<3-2k-\sum\limits_{u\in N_0(u^*)}\frac{x_u}{\xu}<1$ by $\rho^*>5$ and $m=e(W)+|N(u^*)|+e(W,N(u^*))+3k$. This contradicts the fact that $e(W)=1$.
	
	\noindent\textbf{Subcase 1.3.} $w_1,w_2\in W_2$.
	
	Then we show $W_2=\{w_1,w_2\}$. Otherwise, there exists $w_3\in W_2$, then $d_{N(u^*)}(w_3)=1$ by $G^*$ is $\ga$-free. Thus $w_3$ is a cut vertex by $E(W,W)=\{w_1w_2\}$, in contradiction with (ii) of Lemma \ref{lemcut2}. It implies that $e(W_2,N(u^*))=2$ by $d_{N(u^*)}(w)=1$ for any $w\in W_2$.
	
	By symmetry, $x_{w_1}=x_{w_2}$, and then $x_{w_1}\leq \frac{\xu}{\rho^*-1}$ by $\rho^*x_{w_1}\leq x_{w_2}+\xu$. By (\ref{c5eq}), ${\rho^*}^2-\rho^*> m-2$ and $m=|N(u^*)|+e(N(u^*))+e(W,N(u^*))+e(W)=|N(u^*)|+e(N(u^*))+\left(e(W_0,N(u^*))+2\right)+1$, we have
	$$(m-2)\xu< |N(u^*)|\xu+e(N(u^*))\xu-\sum_{u\in N_0(u^*)} x_u+e(W_0,N(u^*))\xu+x_{w_1}+x_{w_2}\leq (m-3+\frac{2}{\rho^*-1})\xu.$$
	Hence, $1< \frac{2}{\rho^*-1}$ and then $\rho^*< 4$, a contradiction.
	
	\noindent\textbf{Case 2.} $e(W)=0$.
	
	Suppose that $W\setminus W_0\neq \varnothing$. Let $w\in W\setminus W_0$. Then we show $w\in W_1$. Otherwise, $|N(w)\cap N(u^*)|=1$ by $w\in  W\setminus (W_0\cup W_1)$, and thus $w$ is a cut vertex by $e(W)=0$, in contradiction with (ii) of Lemma \ref{lemcut2}. Therefore, $N(w)=\{u_1,u_2\}$, where $u_1u_2$ is a $K_2$ component of $G^*[N(u^*)]$. By (\ref{c5eq3}), we know that $c=|N_+(u^*)|-e(N_+(u^*)) \leq 1$. Thus for any $w'\in W\setminus W_0$, $N(w')=\{u_1,u_2\}$. Let $G=G^*+\{wu^*\mid w\in W\setminus W_0\}-\{wu_1\mid w\in W\setminus W_0\}$. Clearly, $G$ is $\ga$-free. Moreover, since $\xu\geq x_{u_1}$ and $N_{G^*}(u^*) \subsetneqq N_{G}(u^*)$, we have $\rho(G)>\rho^*$ by Lemma \ref{moveedge}. it is a contradiction. Therefore, $W=W_0$.
	
	Combining the above two cases, we have $e(W)=0$ and $W=W_0$.
\end{proof}

%
%

\begin{lem}\label{ck}
	Let $c$ be the number of star-components of $G^*[N(u^*)]$, $k$ be the number of triangle-components of $G^*[N(u^*)]$. Then $k=0$ if $c=0$, $m\geq 22$ or $c=1$, $m\geq 14$.
\end{lem}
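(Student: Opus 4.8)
The plan is to argue by contradiction. Suppose $k\geq 1$. Recall from the proof of Lemma \ref{c5ew} that $G^*$ satisfies $\rho^{*2}-\rho^*>m-2$, which is just the restatement of $\rho^*>\frac{1+\sqrt{4m-7}}{2}$ established there via $\rho^*\geq\rho(F_{m,1})$ (resp. $\rho(F_{m,2})$). The whole argument is an application of the weighted-degree identity (\ref{c5eq}) for $u^*$, controlled by the edge count $m=|N(u^*)|+e(N(u^*))+e(W,N(u^*))$ (here $e(W)=0$ by Lemma \ref{c5ew}) together with the crude bound $x_w\leq\xu$.

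First I would compute the Perron weight of a triangle-vertex exactly. Since $W=W_0$ by Lemma \ref{c5ew} and $W_0\cap W_2=\varnothing$ by $\ga$-freeness, we get $W_2=\varnothing$; hence each vertex $u$ of a triangle-component of $G^*[N(u^*)]$ has no neighbour in $W$, so $N(u)=\{u^*\}\cup(\text{its two triangle-partners})$ and $d(u)=3$. By symmetry the three vertices of each such triangle carry a common weight $y$ with $\rho^*y=\xu+2y$, i.e. $y=\frac{\xu}{\rho^*-2}$; as $d_{N(u^*)}(u)-1=1$ for each of them, the $k$ triangles together contribute exactly $\frac{3k}{\rho^*-2}\xu$ to $\sum_{u\in N_+(u^*)}(d_{N(u^*)}(u)-1)x_u$ in (\ref{c5eq}).

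Next I would bound the rest of (\ref{c5eq}) in the direction that helps: drop the non-positive term $-\sum_{u\in N_0(u^*)}x_u$, and bound $\sum_{w\in W}d_{N(u^*)}(w)x_w\leq e(W,N(u^*))\,\xu=(m-|N(u^*)|-e(N(u^*)))\xu$. If $c=0$ then $e(N(u^*))=3k$, and (\ref{c5eq}) collapses to
\[
\rho^{*2}-\rho^*\leq m-3k\cdot\frac{\rho^*-3}{\rho^*-2}.
\]
Comparing with $\rho^{*2}-\rho^*>m-2$ forces $3k\cdot\frac{\rho^*-3}{\rho^*-2}<2$; but $m\geq 22$ gives $\rho^*>5$, hence $\frac{\rho^*-3}{\rho^*-2}>\frac23$, so any $k\geq 1$ violates this and $k=0$. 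If $c=1$, let the unique star be $K_{1,r}$ with centre $s$, so $e(N(u^*))=3k+r$ and the only extra term in $\sum_{u\in N_+(u^*)}(d_{N(u^*)}(u)-1)x_u$ is $(r-1)x_s$. Using $x_s\leq\xu$, the star's net effect after the edge count is $(r-1)\frac{x_s}{\xu}-r=(r-1)\big(\frac{x_s}{\xu}-1\big)-1\leq-1$, which sharpens the estimate to $\rho^{*2}-\rho^*\leq m-1-3k\cdot\frac{\rho^*-3}{\rho^*-2}$, whence $3k\cdot\frac{\rho^*-3}{\rho^*-2}<1$; since $m\geq 14$ gives $\rho^*>4$ and $\frac{\rho^*-3}{\rho^*-2}>\frac12$, again $k\geq1$ is impossible.

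The main obstacle is getting the spectral inequality tight enough for the stated $m$-thresholds, and this is exactly where the previously established structure is indispensable: the exact weight $y=\frac{\xu}{\rho^*-2}$ requires a triangle-vertex to have degree precisely $3$ (guaranteed by $e(W)=0$, $W=W_0$, $W_2=\varnothing$), and the extra $-1$ that lowers the threshold from $m\geq22$ to $m\geq14$ in the $c=1$ case comes solely from treating the star-centre term $(r-1)x_s$ via $x_s\leq\xu$ together with the $-r$ contributed by the star's edges in the edge count. Everything else is bookkeeping with the edge count and the bound $x_w,x_s\leq\xu$.
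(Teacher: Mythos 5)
Your proof is correct and takes essentially the same route as the paper's: both plug the exact Perron equation $x_{u_1}=x_{u^*}/(\rho^*-2)$ for a triangle vertex into identity (\ref{c5eq}), use the edge count $m=|N(u^*)|+e(N(u^*))+e(W,N(u^*))$ with $e(W)=0$, and bound the star-centre term by $x_{u^*}$. The paper simply rounds $1/(\rho^*-2)$ to $1/3$ (resp.\ $1/2$) and phrases the conclusion as $e(W)<2-2k$ (resp.\ $e(W)<1-\tfrac{3}{2}k$), which is equivalent to your $3k\,\tfrac{\rho^*-3}{\rho^*-2}<2$ (resp.\ $<1$).
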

\begin{proof}
	Let $u_1$ be a vertex of a triangle-component of $G^*[N(u^*)]$. By symmetry, $\rho^*x_{u_1}=2x_{u_1}+\xu$. Then
	\begin{equation}\label{rxu}
		x_{u_1}=\frac{\xu}{\rho^*-2}\leq \left\{ \begin{matrix}
			\frac{\xu}{3},& m\geq 22,\\
			\frac{\xu}{2},& m\geq 14.
		\end{matrix}\right.
	\end{equation} by $\rho^*> \frac{1+\sqrt{4m-7}}{2}\geq  \left\{ \begin{matrix}
	    5,& m\geq 22,\\
		4,& m\geq 14.
	\end{matrix}\right.$ 
	
	Let $H$ be the unique star-component of $G^*[N(u^*)]$ when $c=1$. Then by (\ref{rxu}), we have 
	\begin{equation*}\label{xu}
		\sum_{u\in N_+(u^*) }(d_{N(u^*)}(u)-1)x_u\leq  \left\{ \begin{matrix}
			k\xu,& \text{if }c=0, m\geq 22,\\
		\left(e(H)-1+\frac{3}{2}k\right)\xu=\left(e(N_+(u^*)\right)-\frac{3}{2}k-1)\xu	,& \text{if }c=1, m\geq 14.
		\end{matrix}\right.
	\end{equation*}
Combining with (\ref{c5eq}) and $m=|N(u^*)|+e(N(u^*))+e(W,N(u^*))+e(W)$, we have
$$e(W)<  \left\{ \begin{matrix}
	2-2k,& \text{if }c=0, m\geq 22, \\
	1-\frac{3}{2}k,& \text{if }c=1, m\geq 14.
\end{matrix}\right.$$
Thus $k=0$ by $e(W)=0$.
\end{proof}

\begin{figure}[h]
		\centering
		\includegraphics[scale=0.28]{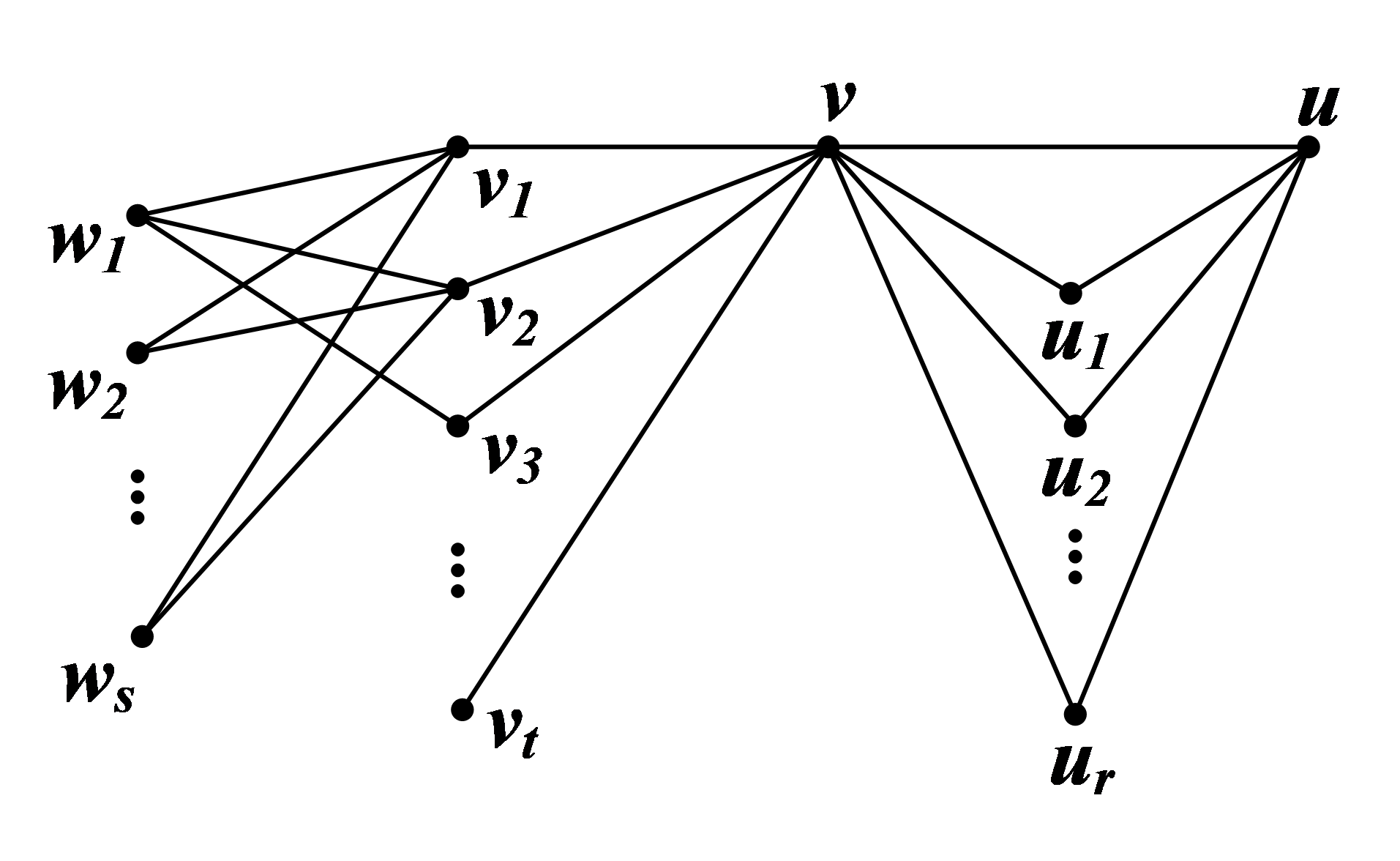}
		\caption{$H_{t,s}\circ K_{1,r}$}
		\label{hts}
\end{figure}

Let $H_{t,s}$ be a bipartite graph with $|T|=t$ and $|S|=s$, $H_{t,s}\circ K_{1,r}$ be the graph obtained by joining a vertex $v$ with all vertices of $K_{1,r}$ and the independent set $T$ of $H_{t,s}$, for $r\geq 1$ and $t,s\geq 0$ (see Figure \ref{hts}).
\begin{lem}\label{-2}
	Let $m\geq 22$. Then there exist integers $r (\geq 1)$, $s(\geq 0)$ and $t(\geq 1)$ such that $G^*\cong H_{t,s}\circ K_{1,r}$, where $H_{t,s}$ is a bipartite graph with size $m-2r-t-1$.
\end{lem}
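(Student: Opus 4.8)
The plan is to pin down the structure of $G^*$ by combining the component analysis already available with one edge-switching argument. Recall from Lemma \ref{c5lem1} that every component of $G^*[N(u^*)]$ is an isolated vertex, a triangle, or a star $K_{1,r}$ ($r\geq 1$), and from Lemma \ref{c5ew} that $e(W)=0$ and $W=W_0=N_W(N_0(u^*))$; hence the vertices lying in triangle- or star-components of $G^*[N(u^*)]$ have no neighbour in $W$, and $u^*$ is a cut vertex separating each non-trivial component from the rest of the graph. The goal is to show that there is exactly one non-trivial component and that it is a star, i.e. $c=1$, $k=0$, $r\geq 1$; once this is known the remaining structure (the isolated neighbours $N_0(u^*)=T$ together with $W=S$ and the bipartite edge set between them) reads off directly, and a size count yields $e(H_{t,s})=m-2r-t-1$.

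The main step, and the one I expect to be the real obstacle, is to prove that $G^*$ has at most one star-component, $c\leq 1$. I would argue by contradiction: suppose there are two star-components with centres $a,b$ and leaf sets $L_a,L_b$, chosen so that $x_a\geq x_b$ in the Perron vector $X^*$. Form $G'$ from $G^*$ by deleting all edges $\{b\ell:\ell\in L_b\}$ and adding all edges $\{a\ell:\ell\in L_b\}$, which merges the two stars into one star centred at $a$ and turns $b$ into a pendant neighbour of $u^*$. Since the leaves of a star-component have their only neighbours among $\{u^*\}$ and their centre, $G'$ still has size $m$, is connected, and $u^*$ remains a cut vertex isolating the enlarged ``book'' block $\{u^*,a\}\cup L_a\cup L_b$. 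As every $\ga$-subgraph is $2$-connected it must lie inside a single block, and the book block $K_2\vee\overline{K}_{|L_a|+|L_b|}$ contains no $\ga$ while every other block is unchanged and $\ga$-free, so $G'$ is $\ga$-free. Moreover $G'$ has the degree-$1$ vertex $b$, hence $G'\ncong S_{\frac{m+3}{2},2}$, so $G'$ is an admissible competitor. Because $N_{G^*}(a)\subsetneqq N_{G'}(a)$ and ${X^*}^T(A(G')-A(G^*))X^*=2\sum_{\ell\in L_b}(x_a-x_b)x_\ell\geq 0$, Lemma \ref{moveedge} gives $\rho(G')>\rho^*$, contradicting the extremality of $G^*$. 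Hence $c\leq 1$.

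With $c\leq 1$ secured, Lemma \ref{ck} (applicable since $c\in\{0,1\}$ and $m\geq 22\geq 14$) yields $k=0$, so $G^*[N(u^*)]$ has at most one non-trivial component and it is a star. It remains to exclude $c=0$: if $c=0=k$ then $G^*[N(u^*)]$ is edgeless and, since $e(W)=0$ and $W$ meets $N(u^*)$ only in $N_0(u^*)$, the graph $G^*$ is bipartite with parts $\{u^*\}\cup W$ and $N(u^*)$, hence triangle-free, so Theorem \ref{Kr+1} with $r=2$ forces $\rho^*\leq\sqrt{m}$. This contradicts $\rho^*\geq\rho(F_{m,1})>\frac{1+\sqrt{4m-7}}{2}>\sqrt{m}$ (using that $F_{m,1}$ is an admissible competitor and Lemma \ref{rho}(ii)). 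Therefore $c=1$ with a unique star $K_{1,r}$, $r\geq 1$.

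Finally I would assemble the isomorphism. Setting $T:=N_0(u^*)$, $t:=|T|$, $S:=W$ and $s:=|S|$, the established facts give exactly the adjacencies of $H_{t,s}\circ K_{1,r}$: $u^*$ is joined to all of $K_{1,r}$ and to $T$, and the only further edges are those between $T$ and $S$, forming a bipartite graph $H_{t,s}$ since $e(W)=0$ and $W=W_0$. If $t=0$ then $W=W_0=\varnothing$ and $G^*=K_2\vee\overline{K}_r=S_{\frac{m+3}{2},2}$, which is excluded, so $t\geq 1$. Counting edges, $m=(r+1)+t+r+e(H_{t,s})$, whence $e(H_{t,s})=m-2r-t-1$, completing the identification $G^*\cong H_{t,s}\circ K_{1,r}$. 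The only delicate points are the $\ga$-freeness and $G'\ncong S_{\frac{m+3}{2},2}$ verifications in the switching step, which is why I regard $c\leq 1$ as the crux.
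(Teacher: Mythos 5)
Your proposal is correct and ends up at the same structural conclusion, but it reaches the one nontrivial fact --- that $G^*[N(u^*)]$ has at most one star-component --- by a genuinely different route. The paper gets $c\leq 1$ essentially for free from the inequality (\ref{c5eq3}) already derived in Lemma \ref{c5ew}: with $e(W)=0$ it reads $0<e(N_+(u^*))-|N_+(u^*)|+2-\sum_{u\in N_0(u^*)}x_u/\xu$, and since each star-component contributes exactly $1$ and each triangle contributes $0$ to $|N_+(u^*)|-e(N_+(u^*))$, this is precisely $c<2$. You instead run a local Kelmans-type switching: merge two star-components into one by re-rooting the leaves of the smaller-weight centre, check that the resulting book block $K_2\vee\overline{K}_{|L_a|+|L_b|}$ is $\ga$-free and that the competitor is admissible, and invoke Lemma \ref{moveedge}. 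Your argument is sound --- the block decomposition does confine any $\ga$ to a single block, and the pendant vertex $b$ rules out $S_{\frac{m+3}{2},2}$ --- and it has the virtue of being self-contained and of not recycling the spectral inequality; the cost is that you must verify $\ga$-freeness after the switch, whereas the paper's counting argument needs no new graph at all. One point you state as immediate but which deserves a line: that leaves and centres of star-components have no neighbours in $W$ does not follow formally from the paper's $W_0\cap W_i=\varnothing$ (that notation only covers $K_2$- and $K_3$-components); for a $K_{1,r}$-component with $r\geq 2$ you need the explicit $\theta_{1,2,3}$ built from $u^*v$, $u^*$--(other star vertex)--$v$, and $u^*$--$v_0$--$w$--$v$ with $v_0\in N_0(u^*)$, using $W=W_0$. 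The remaining steps (Lemma \ref{ck} to kill $k$, triangle-freeness to exclude $c=0$, the $S_{\frac{m+3}{2},2}$ exclusion to force $t\geq 1$, and the edge count $m=(r+1)+t+r+e(H_{t,s})$) match the paper's proof.
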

\begin{proof}
	By Lemma \ref{c5ew}, we have $e(W)=0$ and $W=W_0$. By (\ref{c5eq3}), we have $e(N_+(u^*))> |N_+(u^*)|+\sum\limits_{u\in N_0(u^*)} \frac{x_u}{\xu}-2.$
	Then $c= |N_+(u^*)|-e(N_+(u^*))<2$, say $c=0$ or $1$, and thus $k=0$ by Lemma \ref{ck} and $m\geq 22$.
	
	\noindent \textbf{Case 1.} $c=0$.
	
	Then $G^*$ is triangle-free by $k=0$. By Theorem \ref{Kr+1}, $\rho^*\leq \sqrt{m}<\frac{1+\sqrt{4m-7}}{2}$, a contradiction.
	
	\noindent \textbf{Case 2.} $c=1$.
	
	If $N_0(u^*)=\varnothing$, then $G^*\cong S_{\frac{m+3}{2},2}$. This contradicts with the assumption that $G^*\ncong S_{\frac{m+3}{2},2}$. 
	
	If $N_0(u^*)\neq \varnothing$, without loss of generality, we assume that $H=K_{1,r}$ $( r\geq 1)$ is the unique star-component of $G^*[N(u^*)]$. Then there exist some integers $s(\geq 0)$ and $t(\geq 1)$ such that $G^*\cong H_{t,s}\circ K_{1,r}$, where $H_{t,s}$ is a bipartite graph with size $m-2r-t-1$.
\end{proof}

\begin{lem}\label{s=0}
	If $G^*\cong H_{t,s}\circ K_{1,r}$, then $s=0$.
\end{lem}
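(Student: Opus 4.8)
The plan is to argue by contradiction with the extremality of $G^*$: assuming $s\geq 1$, I will relocate the whole bipartite part $H_{t,s}$ into pendant edges hanging at $u^*$ and show that this strictly increases the spectral radius while keeping the graph inside $\mG(m,\ga)\setminus\{S_{\frac{m+3}{2},2}\}$. Recall that by Lemma \ref{-2} we are in the situation $c=1$, $k=0$, and by Lemma \ref{c5ew} that $W=W_0=S$, so every $w\in S$ satisfies $N(w)\subseteq T=N_0(u^*)$ and $\rho^*x_w=\sum_{t\in N(w)}x_t$. Writing $e(H)=e(T,S)$ for the size of $H_{t,s}$, so that $m=2r+t+1+e(H)$, let $G'$ be obtained from $G^*$ by deleting all vertices of $S$ together with the $e(H)$ edges of $H_{t,s}$ and attaching $e(H)$ new pendant vertices to $u^*$. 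Then $G'\cong F_{m,\,t+e(H)}$; it is $\ga$-free and, having vertices of degree $1$, it is not isomorphic to $S_{\frac{m+3}{2},2}$, so $G'$ is an admissible competitor.

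First I would set up the spectral comparison through the Rayleigh quotient. Extending the Perron vector $X^*$ of $G^*$ to $G'$ by assigning the pendant value $\frac{\xu}{\rho^*}$ to each of the $e(H)$ new vertices and keeping the old entries on the common vertices $u^*,c,\ell_1,\dots,\ell_r,T$, a direct computation of the quadratic form and the norm shows that $\rho(G')>\rho(G^*)$ is implied by a single inequality. The crucial simplification is the eigenvalue identity $\sum_{t\in N(w)}x_t=\rho^*x_w$, which makes the cross terms collapse: the contribution $\sum_{wt\in E(H)}x_wx_t$ equals exactly $\rho^*\sum_{w\in S}x_w^2$, and after cancellation the whole comparison reduces to proving
\[
\sum_{w\in S}x_w^2<\frac{e(H)}{(\rho^*)^2}\,\xu^2 .
\]
Interpreting the right-hand side as the energy carried by $e(H)$ genuine pendants at $u^*$, the inequality says precisely that spreading the edges of $H_{t,s}$ through the set $S$ stores less Perron weight than concentrating them as pendants at the dominant vertex, which is the intuitive reason $s=0$ must hold.

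The main obstacle is establishing the displayed inequality, and the difficulty is entirely concentrated in $S$-vertices of degree $\geq 2$ in $H_{t,s}$. When $d_T(w)=1$ one has $x_w=x_t/\rho^*\leq \xu/\rho^*$, so $\sum_{w}x_w^2\leq s\,(\xu/\rho^*)^2\leq e(H)(\xu/\rho^*)^2$ strictly; in fact in this case the cleanest route bypasses the global move: the single switch $G^*-wt+wu^*$ is edge-preserving, keeps the graph $\ga$-free (the moved vertex becomes a pendant), enlarges $N(u^*)$, and satisfies $X^{*T}(A(G^*-wt+wu^*)-A(G^*))X^*=2x_w(\xu-x_t)\geq 0$, so Lemma \ref{moveedge} already forces $\rho$ to increase and contradicts extremality. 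Thus I may assume every $w\in S$ has $d_T(w)\geq 2$. For such $w$ neither simple move is automatic: a naive relocation to $u^*$ either changes the edge count or creates a triangle $u^*tw$ and risks producing a $\ga$, while the crude bound $x_t\leq\xu$ only yields $\sum_w x_w^2\leq\frac{\xu^2}{(\rho^*)^2}\sum_w d_T(w)^2$, which is too weak.

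To close this gap I would exploit the strictness $x_t<\xu$ for $t\in T$ quantitatively, using the spectral lower bound $\rho^*>\frac{1+\sqrt{4m-7}}{2}$ (hence $\rho^*>5$ for $m\geq 22$) together with the identities $\rho^*x_t=\xu+\sum_{w\in N_S(t)}x_w$ and $\sum_{w\in S}x_w=\frac{1}{\rho^*}\sum_{t\in T}d_S(t)x_t\leq\frac{e(H)}{\rho^*}\xu$ to feed back sharp bounds on each $x_w$, reducing the displayed inequality to an elementary estimate in $\rho^*$. An alternative, and perhaps cleaner, realization of the same idea is to first perform a weight-non-decreasing compression of $H_{t,s}$ (a Kelmans-type shift of the $T$--$S$ edges toward the $T$-vertex of largest Perron entry) to reach a graph in which every $S$-vertex has a unique neighbour in $T$, and then finish with the edge-preserving pendant switch and Lemma \ref{moveedge}. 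Either way the conclusion is that $s\geq 1$ contradicts the maximality of $\rho(G^*)$, so $s=0$.
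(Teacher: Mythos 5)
Your reduction is set up correctly: with the test vector you describe, the identity $\sum_{t'\in N(w)}x_{t'}=\rho^*x_w$ does collapse the Rayleigh-quotient comparison between $G^*$ and $F_{m,\,t+e(H)}$ to the single inequality $\sum_{w\in S}x_w^2<\frac{e(H)}{(\rho^*)^2}\xu^2$. But the proof of that inequality is exactly where your argument stops, and it is the entire content of the lemma. First, your ``easy case'' $d_T(w)=1$ is vacuous: by Lemma \ref{lemcut2}(ii) every $w\in W$ has $d(w)\geq 2$, and since $e(W)=0$ all of its neighbours lie in $T$, so every $S$-vertex is in your hard case. More seriously, the displayed inequality is \emph{not} a consequence of the structural information available at this point. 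Take $H_{t,1}\circ K_{1,1}$ in which the unique $S$-vertex $w$ is joined to all of $T$: this graph is $\ga$-free, has no cut vertex outside $u^*$, and satisfies $e(W)=0$, $W=W_0$, $c=1$, $k=0$, yet $x_w=\frac{t}{(\rho)^2-t}\xu$ with $t=\frac{m-3}{2}$, so $x_w^2$ is close to $\xu^2$ while $\frac{e(H)}{\rho^2}\xu^2\approx\frac12\xu^2$, and the inequality fails. Hence any completion must invoke the extremal lower bound $\rho^*>\frac{1+\sqrt{4m-7}}{2}$ in an essential, quantitative way; your sketch (``feed back sharp bounds on each $x_w$, reducing to an elementary estimate in $\rho^*$'') does not supply this. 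The Kelmans-type fallback does not rescue it either: a Kelmans shift between two $T$-vertices keeps their common $S$-neighbours adjacent to both, so it does not produce $d_T(w)=1$, and such shifts need not preserve $\ga$-freeness. This is a genuine gap.

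For comparison, the paper avoids the graph-modification route altogether. It first forces $r\geq 3$: if $r\leq 2$ one deletes the centre $u$ of the $K_{1,r}$, notes that $G^*-u$ is triangle-free, and combines Theorem \ref{Kr+1} with Lemma \ref{-v} to get $\rho^*\leq\sqrt{m+r}<\frac{1+\sqrt{4m-7}}{2}$, a contradiction. For $r\geq 3$ it feeds the structure into the eigenvalue identity (\ref{c5eq}) at $u^*$ together with $(\rho^*)^2-\rho^*>m-2$, arriving at $(r-2)\bigl((\rho^*)^2-r\bigr)<(r-1)(\rho^*+r)$, which is then shown to be impossible for $\rho^*>\frac{1+\sqrt{4m-7}}{2}$ using $m\geq 2r+5$ (a consequence of $|N_0(u^*)|\geq 2$). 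If you wish to salvage your approach, you need an analogous quantitative step in place of the unproven inequality.
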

\begin{proof}
	By the proof of Lemma \ref{-2}, we know $v=u^*$ (see Figure \ref{hts}). Let $V(K_{1,r})=\{u,u_1,\dots,u_r\}$ with $d_{N(u^*)}(u)=r$. By symmetry, $\rho^* x_{u_1}=\xu+x_u$ and $\rho^*x_u= \xu+rx_{u_1}$. Therefore, $x_u=\frac{\rho^*+r}{{\rho^*}^2-r}\xu$ and $x_{u_1}=\frac{\rho^*+1}{{\rho^*}^2-r}\xu$.
	
	Suppose that $s\neq 0$. Then $W_0\neq \varnothing$ and $N_0(u^*)\neq \varnothing$. If $r\in \{1,2\}$, $G^*-u$ is triangle-free and $\rho(G^*-u)\leq \sqrt{m(G^*-u)}=\sqrt{m-r-1}$ by Theorem \ref{Kr+1}. Then by Lemma \ref{-v}, we have $\rho^*\leq \sqrt{\rho^2(G^*-u)+2r+1}\leq \sqrt{m+r}<\frac{1+\sqrt{4m-7}}{2}$ when $m\geq 22$, a contradiction. Hence, $r\geq 3$.
	
	By (\ref{c5eq}) and ${\rho^*}^2-\rho^*> m-2$, we have
	$$(m-2)\xu< \left(|N(u^*)|+(r-1)\frac{\rho^*+r}{{\rho^*}^2-r}-\sum_{u\in N_0(u^*)} \frac{x_u}{\xu}+e(W,N(u^*))\right)\xu.$$
	Hence, $r-2=e(N_+(u^*))-2< (r-1)\frac{\rho^*+r}{{\rho^*}^2-r}$ by Lemma \ref{c5ew}, that is, $(r-2)({\rho^*}^2-r)-(r-1)(\rho^*+r)<0$.
	
	On the other hand, let $h(x)=(r-2)(x^2-r)-(r-1)(x+r)=(r-2)(x^2-x)-x-2r^2+3r$. Then $h'(x)=2(r-2)x-(r-1)>0$ for $x\geq \frac{1+\sqrt{4m-7}}{2}$, which means $h(x)$ is increasing in the interval $[\frac{1+\sqrt{4m-7}}{2},+\infty)$. 
	Since $d(w)\geq 2$ for each $w\in W_0$ by Lemma \ref{lemcut2}, we have $|N_0(u^*)|\geq 2$ and $m\geq 2r+5$. Notice that $x^2-x=m-2$ if $x=\frac{1+\sqrt{4m-7}}{2}$. Thus,
	\begin{align*}
		h\left(\frac{1+\sqrt{4m-7}}{2}\right)&=(r-2)(m-2)-\frac{1+\sqrt{4m-7}}{2}-2r^2+3r\\
		&=(r-3)(m-2)-2r^2+3r+m-2-\frac{1+\sqrt{4m-7}}{2}\\
		&\geq m-11-\frac{1+\sqrt{4m-7}}{2}>0.
	\end{align*}
	Note that $\rho^*>\frac{1+\sqrt{4m-7}}{2}$. It follows that $h(\rho^*)>0$, a contradiction.
	
	Then $s=0$ and $G^*\cong F_{m,t}$ for some $t\geq 1$.                                       
\end{proof}

\noindent\textbf{Proof of Theorem \ref{c5}.} By Lemma \ref{-2} and Lemma \ref{s=0}, we have $G^*\cong F_{m,t}$ for some $t\geq 1$. We conclude from Lemma \ref{rho} that $G^*\cong F_{m,1}$ if $m$ is even and $G^*\cong F_{m,2}$ if $m$ is odd. The proof is complete.$\hfill \qed$

\end{document}